\title{An Equivalence Result on the Order of Differentiability in Frobenius' Theorem}
\author{Yuhki Hosoya\thanks{E-mail: ukki(at)gs.econ.keio.ac.jp,\ ORCID ID:0000-0002-8581-4518}~{}\thanks{Faculty of Economics, Chuo University. 742-1, Higashinakano, Hachioji, Tokyo, 192-0393 Japan.}}
\renewenvironment{proof}[1][\proofname]{\par
  \pushQED{\qed}%
  \normalfont \topsep6\p@\@plus6\p@\relax
  \trivlist
  \item\relax
  {\bfseries
  #1\@addpunct{.}}\hspace\labelsep\ignorespaces
}{%
  \popQED\endtrivlist\@endpefalse
}
\theoremstyle{definition}
\newtheorem{prop}{Proposition}
\newtheorem{thm}{Theorem}
\newtheorem{lem}{Lemma}
\newtheorem{cor}{Corollary}
\begin{document}
\maketitle

\begin{abstract}
This paper examines the simplest case of total differential equations that appears in the theory of foliation structures, without imposing the smoothness assumptions. This leads to a peculiar asymmetry in the differentiability of solutions. To resolve this asymmetry, this paper focuses on the differentiability of the integral manifold. When the system is locally Lipschitz, a solution is ensured to be only locally Lipschitz, but the integral manifolds must be $C^1$. When the system is $C^k$, we can only ensure the existence of a $C^k$ solution, but the integral manifolds must be $C^{k+1}$. In addition, we see a counterexample in which the system is $C^1$, but there is no $C^2$ solution. Moreover, we characterize a minimizer of an optimization problem whose objective function is a quasi-convex solution to a total differential equation. In this connection, we examine two necessary and sufficient conditions for the system in which any solution is quasi-convex.

\vspace{12pt}
\noindent
{\bf Keywords}: Total Differential Equation, Frobenius' Theorem, Lipschitz Functions, Quasi-Convex Functions.

\vspace{12pt}
\noindent
{\bf MSC2020 codes}: 57R30, 90C25.
\end{abstract}

\section{Introduction}
Consider the following {\bf total differential equation}:
\begin{equation}\label{TDE}
\nabla u(x)=\lambda(x)g(x),
\end{equation}
where $U\subset \mathbb{R}^n$ is open and $g:U\to \mathbb{R}^n\setminus \{0\}$ is given. It is well-known that if $g$ is $C^{\infty}$, then the following two conditions are equivalent:
\begin{enumerate}[(1)]
\item For every $x^*\in U$, there exists an open neighborhood $V\subset U$ of $x^*$, a $C^{\infty}$ nondegenerate function $u:V\to \mathbb{R}$, and a $C^{\infty}$ non-zero function $\lambda:V\to \mathbb{R}$ such that (\ref{TDE}) holds for each $x\in V$.\footnote{In this case, we say that $(u,\lambda)$ is a {\bf local solution} to (\ref{TDE}) around $x^*$.}

\item The following {\bf Jacobi's integrability condition} holds for any $x^*\in U$ and $i,j,k\in \{1,...,n\}$:\footnote{For simplicity of notation, variables are abbreviated.}
\begin{equation}\label{J}
g_i\left(\frac{\partial g_j}{\partial x_k}-\frac{\partial g_k}{\partial x_j}\right)+g_j\left(\frac{\partial g_k}{\partial x_i}-\frac{\partial g_i}{\partial x_k}\right)+g_k\left(\frac{\partial g_i}{\partial x_j}-\frac{\partial g_j}{\partial x_i}\right)=0.
\end{equation}
\end{enumerate}
This result is called {\bf Frobenius' theorem}.

The problem treated in this paper arises when we drop the $C^{\infty}$ assumption of $g$. Consider the case in which $u:U\to \mathbb{R}$ is $C^2$ and nondegenerate. Define $\lambda\equiv 1$ and $g=\nabla u$. Then, $(u,\lambda)$ is a local solution to (\ref{TDE}), where $g$ is $C^1$. Conversely, suppose that $g$ is a $C^1$ non-zero vector field on $U\subset \mathbb{R}^n$ that satisfies (\ref{J}) at any point in $U$. Frobenius' theorem can be extended to treat this case, and thus for each $x^*\in U$, there exists a local solution $(u,\lambda)$ to (\ref{TDE}) defined on some open neighborhood $V$ of $x^*$. However, this $u$ is not necessarily $C^2$. Debreu \cite{D76} constructed the following counterexample.
\[g(x_1,x_2)=\begin{cases}
\left(\frac{x_2^2}{\sqrt{1+x_2^4}},\frac{1}{\sqrt{1+x_2^4}}\right) & \mbox{if } x_2\ge 0,\\
(0,1) & \mbox{if }x_2<0.
\end{cases}\]
A solution $(u,\lambda)$ to (\ref{TDE}) around $(0,0)$ can be found, but this $u$ is $C^1$. We argue in the appendix that there is no solution $(v,\mu)$ to (\ref{TDE}) around $(0,0)$ such that $v$ is $C^2$.

This paper aims to solve this asymmetric result. Even in the above example, the integral curve must be $C^2$.\footnote{A level set of a solution to (\ref{TDE}) is sometimes called an {\bf integral manifold}.} We focus on this feature, and show that, 1) for a locally Lipschitz function $g:U\to \mathbb{R}^n\setminus \{0\}$, it satisfies (\ref{J}) almost everywhere if and only if, for any $x^*\in U$, there exists a solution $(u,\lambda)$ to (\ref{TDE}) around $x^*$ such that $u$ is locally Lipschitz and any level set of $u$ is an $n-1$ dimensional $C^1$ manifold, and 2) if $g$ is $C^k$, then $g$ satisfies (\ref{J}) everywhere if and only if, for any $x^*\in U$, there is a solution $(u,\lambda)$ around $x^*$ such that $u$ is $C^k$ and any level set of $u$ is an $n-1$-dimensional $C^{k+1}$ manifold.

We additionally address an application of this result. That is, we characterize the condition for a point to be a minimizer of a solution $u$ to (\ref{TDE}) with convex restrictions, where $u$ is {\bf quasi-convex}. If $u$ is convex, then it is well-known that the Karush--Kuhn--Tucker condition is necessary and sufficient for a solution to such a minimization problem. However, there exists $g$ such that any solution to (\ref{TDE}) is quasi-convex, but there is no convex solution to (\ref{TDE}).\footnote{See the appendix.} Our result is effective even in such a case. Together with this result, we provide two necessary and sufficient conditions for $g$ that ensure that any solution $u$ to (\ref{TDE}) is quasi-convex.

This paper is organized as follows. In Section 2, we present several mathematical notions that are heavily used in this paper. In Section 3, the main result is presented. In Section 4, the KKT property for minimization problems with quasi-convex solutions to (\ref{TDE}) is treated.

\section{Preliminary}
\subsection{General Symbols}
Before arguing for the result, we define several notions related to (\ref{TDE}). Throughout this paper, we use the following notation: for $x\in \mathbb{R}^N$ and $r>0$, $B_r(x)=\{y\in \mathbb{R}^N|\|y-x\|<r\}$ and $\bar{B}_r=\{y\in \mathbb{R}^N|\|y-x\|\le r\}$. The symbol $e_i$ denotes the $i$-th unit vector. The notation $\mathbb{R}^N_+,\mathbb{R}^N_{++}$ means the following set:
\[\mathbb{R}^N_+=\{x\in \mathbb{R}^n|x_i\ge 0\mbox{ for all }i\in \{1,...,N\}\},\]
\[\mathbb{R}^N_{++}=\{x\in \mathbb{R}^n|x_i>0\mbox{ for all }i\in \{1,...,N\}\}.\]
If $N=1$, we abbreviate this symbol, and simply write $\mathbb{R}_+,\mathbb{R}_{++}$. Moreover, for two vectors $x,y\in \mathbb{R}^N$, we write $x\ge y$ if $x-y\in\mathbb{R}^N_+$, $x\gneq y$ if $x-y\in \mathbb{R}^N_+\setminus\{0\}$, and $x\gg y$ if $x-y\in \mathbb{R}^N_{++}$. A real-valued function $f$ is said to be {\bf increasing} (resp. {\bf decreasing}) if $f(x)>f(y)$ (resp. $f(x)<f(y)$) for any $x,y$ with $x\gg y$, and {\bf strictly increasing} (resp. {\bf strictly decreasing}) if $f(x)>f(y)$ (resp. $f(x)<f(y)$) for any $x,y$ with $x\gneq y$.

\subsection{Lipschitz Functions}
Recall the Lipschitz assumption of functions. Consider a function $f:U\to \mathbb{R}^m$, where $U\subset \mathbb{R}^n$. We say that $f$ is {\bf $L$-Lipschitz} if the following inequality
\[\|f(x)-f(y)\|\le L\|x-y\|\]
holds for all $x,y\in U$. If $f$ is $L$-Lipschitz for some $L>0$, then $f$ is said to be {\bf Lipschitz}. If a restriction of $f$ into $C\subset U$ is Lipschitz, then we say that $f$ is {\bf Lipschitz on $C$}. When $U$ is open, we say that $f$ is {\bf locally Lipschitz} if $f$ is Lipschitz on any compact subset of $U$.

It is well-known that any locally Lipschitz function defined on an open set is differentiable almost everywhere. This result is called {\bf Rademacher's theorem}.

\subsection{Differentiable Manifolds}
Recall the definition of differentiable manifolds. We use only regular submanifolds of some Euclidean space, and thus we can use the definition of manifold that appears in Milnor \cite{M} or Guillemin and Pollack \cite{GP}.\footnote{The proofs of facts treated in this subsection are found in Chapter 1 of Guillemin and Pollack \cite{GP}.} First, suppose that $X\subset \mathbb{R}^N$ and $f:X\to \mathbb{R}^M$. We say that $f$ is $C^k$ if, for any $x\in X$, there exists $r>0$ and a $C^k$ mapping $F:B_r(x)\to \mathbb{R}^M$ such that if $y\in U\cap B_r(x)$, then $F(y)=f(y)$. A bijection $f:X\to Y$ is called a {\bf $C^k$ diffeomorphism} if $f$ and $f^{-1}$ are $C^k$, and two sets $X,Y$ are said to be {\bf $C^k$ diffeomorphic} if there is a $C^k$ diffeomorphism $f:X\to Y$. A set $X\subset \mathbb{R}^N$ is called an {\bf $n$-dimensional $C^k$ manifold} if, for any $x\in X$, there exists an open neighborhood $V\subset X$ of $x$ that is $C^k$ diffeomorphic to some open set $U$ in $\mathbb{R}^n$. The corresponding $C^k$ diffeomorphism $\phi:U\to V$ is called a {\bf local parametrization} around $x$, and the inverse $\phi^{-1}$ is called a {\bf local coordinate}. The dimension $n$ of $X$ is denoted by $\dim X$.

Suppose that $X\subset \mathbb{R}^N$ is a $n$-dimensional $C^k$ manifold, $x\in X$, $\phi:U\to V$ is a local parametrization around $x$, and $\phi(u)=x$. Let $D\phi(u)$ be the Jacobian matrix of $\phi$ at $u$. Define $T_x(X)$ as the range of $D\phi(u)$. It is well-known that this linear space is $n$-dimensional and independent of the choice of local parametrization $\phi$, and $v\in T_x(X)$ if and only if there exist an $\varepsilon>0$ and a $C^1$ function $c:(-\varepsilon,\varepsilon)\to X$ such that $c(0)=x$ and $v=\dot{c}(0)$. $T_x(X)$ is called the {\bf tangent space} of $X$ at $x$. A function $g:X\to \mathbb{R}^N$ is called a {\bf normal vector field} if $g(x)\cdot v=0$ for all $x\in X$ and $v\in T_x(X)$.

Let $X$ and $Y$ be $C^1$ manifolds, and $f:X\to Y$ be $C^1$. For each $x\in X$, choose $v\in T_x(X)$ and a $C^1$ curve $c(t)$ such that $c(0)=x$ and $\dot{c}(0)=v$. Then, $d\equiv f\circ c$ is a curve in $Y$ such that $d(0)=y$. We define $df_x(v)=\dot{d}(0)$. It is known that $df_x(v)$ is independent of the choice of $c(t)$, and the mapping $df_x$ is a linear mapping from $T_x(X)$ into $T_{f(x)}(Y)$. We call $df_x$ the {\bf derivative} of $f$ at $x$. The usual chain rule $d(f\circ g)_x=df_{g(x)}\circ dg_x$ can easily be verified.

In particular, suppose that $\dim X=\dim Y$. It is known that $df_x$ is a bijection if and only if there exist an open neighborhood $U$ of $x$ and an open neighborhood $V$ of $f(x)$ such that $f$ is a diffeomorphism from $U$ onto $V$. This result is called the {\bf inverse function theorem}.

This theorem can be extended in two different ways. Let $X$ and $Y$ be $C^1$ manifolds. First, suppose that $\dim X\le \dim Y$, and $df_x:T_x(X)\to T_{f(x)}(Y)$ is one-to-one. Choose any local parametrization $\phi:U\to V$ of $X$ around $x$. Then, there exists a local parametrization $\psi:U'\to V'$ of $Y$ around $f(x)$ such that $(\psi^{-1}\circ f\circ \phi)(u_1,...,u_k)=(u_1,...,u_k,0,...,0)$. This result is called the {\bf local immersion theorem}. Second, suppose that $\dim X\ge \dim Y$, and $df_x:T_x(X)\to T_{f(x)}(Y)$ is onto. Choose any local parametrization $\psi:U'\to V'$ of $Y$ around $f(x)$. Then, there exists a local parametrization $\phi:U\to V$ of $X$ around $x$ such that $(\psi^{-1}\circ f\circ\phi)(u_1,...,u_k)=(u_1,...,u_{\ell})$. This result is called the {\bf local submersion theorem}.

Let $X,Y$ be $C^1$ manifolds, and $f:X\to Y$ be $C^1$. A point $y\in Y$ is called a {\bf regular value} if, for each $x\in f^{-1}(y)$, $df_x$ is onto. It is well-known that if $y$ is a regular value and $f^{-1}(y)$ is nonempty, then $f^{-1}(y)$ is a $C^1$ manifold whose dimension is $\dim X-\dim Y$. This result is called the {\bf preimage theorem}.

Finally, Let $X,Y\subset \mathbb{R}^N$ be $C^1$ manifolds and $Z=X\cap Y$. Suppose that $Z\neq \emptyset$, and for each $z\in Z$, $T_z(X)+T_z(Y)=\mathbb{R}^N$. Then, $X$ is said to be {\bf transversal} to $Y$. If so, $Z$ is a manifold whose dimension is $N-\dim X-\dim Y$.

\subsection{Ordinary Differential Equations}
Next, we mention basic knowledge on the {\bf solution function} of ordinary differential equations. Consider the following parametrized ordinary differential equation:
\begin{equation}\label{ODES}
\dot{x}(t)=h(t,x(t),z),\ x(t_0)=x_0,
\end{equation}
where $U\subset \mathbb{R}\times \mathbb{R}^N\times \mathbb{R}^M$ is open and $h:U\to \mathbb{R}^N$ is continuous. A continuously differentiable function $x(t)$ is called a {\bf solution} to (\ref{ODES}) if it is defined on an interval $I\subset \mathbb{R}$ that includes $t_0$,\footnote{We call a set $I\subset \mathbb{R}$ an {\bf interval} if $I$ is a convex subset of $\mathbb{R}$ that includes at least two different points.} $\dot{x}(t)=h(t,x(t),z)$ for all $t\in I$, and $x(t_0)=x_0$. If there are two solutions $x(t),y(t)$ to (\ref{ODES}), $x(t)$ is called an {\bf extension} of $y(t)$ if the domain of $x(t)$ includes that of $y(t)$ and $x(t)=y(t)$ if both are defined at $t$. A solution $x(t)$ to (\ref{ODES}) is called a {\bf nonextendable solution} if there is no extension of $x(t)$ except $x(t)$ itself. It is known that if $h$ is locally Lipschitz, then for all $(t_0,x_0,z)\in U$, there uniquely exists a nonextendable solution $x(t;z,x_0)$ to (\ref{ODES}). The mapping $(t;z,x_0)\mapsto x(t;z,x_0)$ is called the {\bf solution function} of (\ref{ODES}). It is also well-known that if $h$ is locally Lipschitz, then the domain of the solution function is open, and the solution function is locally Lipschitz. Moreover, if $h$ is $C^k$, then $x$ is also $C^k$.\footnote{Almost all assertions in this paragraph are proved in famous textbooks. See, for example, Pontryagin \cite{P}, Hartman \cite{H}, or Coddington and Levinson \cite{CL}. Only the local Lipschitz property of $x$ is a little hard to find a proof, but it appears in Subsection 7.1 of Hosoya \cite{H24}.}

\subsection{Convex Analysis}
Recall the definition of convex functions. A nonempty set $C\subset \mathbb{R}^n$ is said to be {\bf convex} if $(1-t)x+ty\in C$ for all $x,y\in C$ and $t\in [0,1]$. Suppose that $C$ is a convex set, and let $f:C\to \mathbb{R}$. Then, $f$ is said to be {\bf convex} if the epigraph $E=\{(x,a)|a\ge f(x)\}$ is convex. This condition is equivalent to the following: for any $x,y\in C$ and $t\in [0,1]$,
\[f((1-t)x+ty)\le (1-t)f(x)+tf(y).\]
A function $f:C\to \mathbb{R}$ is said to be {\bf concave} if $-f$ is convex. In other words, $f$ is concave if and only if, for any $x,y\in C$ and $t\in [0,1]$,
\[f((1-t)x+ty)\ge (1-t)f(x)+tf(y).\]
Sometimes, a relaxed condition for convexity is useful. Suppose that $C\subset \mathbb{R}^n$ is convex, and let $f:C\to \mathbb{R}$. Then, $f$ is said to be {\bf quasi-convex} if, for any $x\in C$, the lower contour set $L(x)=\{y\in C|f(y)\le f(x)\}$ is convex. This is equivalent to the following condition: for any $x,y\in C$ and $t\in [0,1]$,
\[f((1-t)x+ty)\le \max\{f(x),f(y)\}.\]
Because $(1-t)f(x)+tf(y)\le \max\{f(x),f(y)\}$, any convex function is quasi-convex. However, the converse is not true. For example, if $C=\mathbb{R}^2_{++}$, $f(x)=-x_1x_2$ is quasi-convex but not convex.

$f$ is said to be {\bf quasi-concave} if $-f$ is quasi-convex. In other words, $f$ is quasi-concave if and only if, for any $x,y\in C$ and $t\in [0,1]$,
\[f((1-t)x+ty)\ge \min\{f(x),f(y)\}.\]

Suppose that $U\subset \mathbb{R}^n$ is open and convex, and $f:U\to \mathbb{R}$ is convex. For each $x\in U$, define
\[\partial f(x)=\{p\in \mathbb{R}^n|\forall y\in U,\ f(y)-f(x)\ge p\cdot (y-x)\}.\]
This set-valued function $\partial f$ is called the {\bf subdifferential} of $f$. It is well-known that, if $f_1,f_2$ are two convex functions and $f=f_1+f_2$, then
\[\partial f(x)=\partial f_1(x)+\partial f_2(x).\]
See Chapter 23 of Rockafellar \cite{R} for detailed arguments.

Suppose that $C_1,C_2\subset \mathbb{R}^n$ are convex sets and $C_1\cap C_2=\emptyset$. Then, there exists $p\in \mathbb{R}^n\setminus \{0\}$ such that
\[\sup\{p\cdot x|x\in C_1\}\le \inf\{p\cdot x|x\in C_2\}.\]
This result is called the {\bf separating hyperplane theorem}. See Theorem 11.3 of Rockafellar \cite{R}.

In particular, suppose that $C\subset \mathbb{R}^n$ is a closed convex set and $x^*$ is in the boundary of $C$. Choose a sequence $(x^k)$ such that $x^k\notin C$ and $x^k\to x^*$ as $k\to \infty$. Applying the separating hyperplane theorem for the singleton $\{x^k\}$ and $C$, there exists $p^k\neq 0$ such that $p^k\cdot x\ge p^k\cdot x^k$ for all $x\in C$. Without loss of generality, we can assume that $\|p^k\|=1$, and thus, taking a subsequence, we can assume that $p^k\to p^*\neq 0$ as $k\to \infty$. Then, $p^*\cdot x\ge p^*\cdot x^*$ for all $x\in C$. This result is called the {\bf supporting hyperplane theorem}.

\section{Results}
\subsection{Normal Solutions to the Total Differential Equation}
Recall the total differential equation (\ref{TDE}):
\[\nabla u(x)=\lambda(x)g(x).\]
We assume that $U\subset \mathbb{R}^n$ is open and $g:U\to \mathbb{R}^n\setminus \{0\}$ is locally Lipschitz. A function $u:V\to \mathbb{R}$ is called a {\bf normal solution} to (\ref{TDE}) if 1) the domain $V$ of $u$ is open and included in $U$, 2) $u$ is locally Lipschitz, 3) any level set of $u$ is an $n-1$ dimensional $C^1$ manifold, 4) $g$ is a normal vector field of any level set of $u$, 5) $u$ has no local maximum point and local minimum point, and 6) if $u$ is differentiable at $x$, then $\nabla u(x)\neq 0$. If $u$ is a normal solution and $V$ includes $x^*\in U$, then $u$ is also called a {\bf normal solution around $x^*$}.

\vspace{12pt}
\noindent
{\bf NOTE}: In the introduction, we treat a pair of functions $(u,\lambda)$ as a solution to (\ref{TDE}). In contrast, in this subsection, we define a solution as a sole function $u$, and the function $\lambda$ is absent. However, the difference in these definitions is merely formal.
Actually, we can show the following result.

\begin{prop}\label{prop1}
Suppose that $U\subset \mathbb{R}^n$ is open and $g:U\to \mathbb{R}^n\setminus \{0\}$ is locally Lipschitz. If $V\subset U$ is open and $u:V\to \mathbb{R}, \lambda:V\to \mathbb{R}$ are a pair of a $C^1$ nondegenerate function and a continuous non-zero function that satisfies the relationship \textup{(\ref{TDE})} everywhere, then $u$ is a normal solution. Conversely, if $u:V\to \mathbb{R}$ is a $C^1$ normal solution, then there exists a continuous non-zero function $\lambda:V\to \mathbb{R}$ such that the relationship \textup{(\ref{TDE})} holds everywhere.
\end{prop}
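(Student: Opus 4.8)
The plan is to prove the two implications separately, in each case checking the relevant items among the six defining properties of a normal solution.

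For the forward direction, suppose $(u,\lambda)$ is as hypothesized. Property 1) is immediate, and 2) holds because a $C^1$ function is locally Lipschitz (its gradient is bounded on compact subsets of $V$, so the mean value inequality applies). Since $\nabla u(x)=\lambda(x)g(x)$ with $\lambda(x)\neq 0$ and $g(x)\neq 0$, we get $\nabla u(x)\neq 0$ for every $x\in V$; this yields 6) at once, and it also shows that every real number is a regular value of $u\colon V\to\mathbb{R}$, so by the preimage theorem every nonempty level set is an $(n-1)$-dimensional $C^1$ manifold, which is 3). For 4), at a point $x$ in a level set $M=u^{-1}(c)$ the curve characterization of the tangent space gives $T_x(M)\subseteq\{v\in\mathbb{R}^n:\nabla u(x)\cdot v=0\}$, and since both sides have dimension $n-1$ they coincide; as $\nabla u(x)$ is a nonzero scalar multiple of $g(x)$, every $v\in T_x(M)$ also satisfies $g(x)\cdot v=0$, so $g$ is a normal vector field of $M$. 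Finally, for 5), a $C^1$ function on an open set cannot have an interior local extremum at a point where its gradient is nonzero, so the nonvanishing of $\nabla u$ rules out local maxima and minima.

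For the converse, let $u$ be a $C^1$ normal solution. Property 6) together with the differentiability of $u$ forces $\nabla u(x)\neq 0$ on $V$. Fix $x\in V$ and set $M=u^{-1}(u(x))$, which by 3) is an $(n-1)$-dimensional $C^1$ manifold through $x$. As above, $T_x(M)=\{v\in\mathbb{R}^n:\nabla u(x)\cdot v=0\}$, whose orthogonal complement is the line $\mathbb{R}\nabla u(x)$. By 4), $g(x)$ is orthogonal to $T_x(M)$, hence $g(x)\in\mathbb{R}\nabla u(x)$; since $g(x)\neq 0$, the vectors $\nabla u(x)$ and $g(x)$ are parallel and nonzero. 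It then suffices to put $\lambda(x)=\dfrac{\nabla u(x)\cdot g(x)}{\|g(x)\|^{2}}$: parallelism gives $\nabla u(x)=\lambda(x)g(x)$, and $\lambda(x)\neq 0$ because $\nabla u(x)\cdot g(x)\neq 0$. Continuity of $\lambda$ on $V$ is clear from this formula, since $\nabla u$ and $g$ are continuous and $\|g\|$ is locally bounded away from $0$.

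The argument is essentially bookkeeping against the definition, so I do not expect a serious obstacle; the only point requiring a little care is the identification $T_x(M)=\ker Du(x)$ for a level set $M$, which is exactly what links the geometric conditions 3)--4) to the analytic relation $\nabla u=\lambda g$, together with the observation that the parallelism of $\nabla u(x)$ and $g(x)$ makes the closed form for $\lambda$ both well defined and nonvanishing.
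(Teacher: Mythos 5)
Your proof is correct and follows essentially the same route as the paper: the preimage theorem plus the curve characterization of tangent spaces for the forward direction, and orthogonality to the $(n-1)$-dimensional tangent space forcing $\nabla u(x)$ parallel to $g(x)$ for the converse. The only cosmetic difference is your closed-form $\lambda(x)=\nabla u(x)\cdot g(x)/\|g(x)\|^{2}$, where the paper instead uses the local quotient $\lambda(x)=\frac{\partial u}{\partial x_i}(x)/g_i(x)$ for an index $i$ with $g_i(x)\neq 0$; both give continuity and nonvanishing equally well.
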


\begin{proof}
Suppose that $V\subset U$ is open and a pair of functions $u:V\to \mathbb{R},\lambda:V\to \mathbb{R}$ satisfy all requirements in this theorem, and choose any $x\in V$. Define $X=u^{-1}(u(x))$. Because $u$ is nondegenerate, $u(x)$ is a regular value of $u$. By the preimage theorem, $X$ is an $n-1$ dimensional $C^1$ manifold. Let $v\in T_x(X)$, and choose a $C^1$ function $c:(-\varepsilon,\varepsilon)\to X$ such that $c(0)=x$ and $\dot{c}(0)=v$. Then, $u(c(t))\equiv u(x)$, and thus
\[\nabla u(x)\cdot v=\left.\frac{d}{dt}u(c(t))\right|_{t=0}=0.\]
Hence, $\nabla u(x)$ is orthogonal to $T_x(X)$, and thus $g(x)$ is also orthogonal to $T_x(X)$. Therefore, $g(x)$ is a normal vector field of $X$, as desired. It is clear that $u$ has no extremum point, and thus it is a normal solution to (\ref{TDE}).

Conversely, suppose that $u:V\to \mathbb{R}$ is a $C^1$ normal solution to (\ref{TDE}). Then, $\nabla u(x)\neq 0$ everywhere. As in the last paragraph, we can show that for $X=u^{-1}(u(x))$, $g(x)$ is orthogonal to $T_x(X)$. Because both $g(x)$ and $\nabla u(x)$ are orthogonal to the $n-1$ dimensional space $T_x(X)$, $\nabla u(x)$ is proportional to $g(x)$, and thus there exists $\lambda(x)\in \mathbb{R}$ such that $\nabla u(x)=\lambda(x)g(x)$. Choose any $i$ such that $g_i(x)\neq 0$ and $\frac{\partial u}{\partial x_i}(x)\neq 0$. Then,
\[\lambda(x)=\frac{\frac{\partial u}{\partial x_i}(x)}{g_i(x)},\]
where the right-hand side is continuous and non-zero around $x$. Therefore, $\lambda$ is continuous and non-zero, as desired. This completes the proof. $\blacksquare$
\end{proof}

\subsection{Key Lemmas}
In this subsection, we treat several important lemmas.

\begin{lem}\label{LEMM}
Suppose that $U\subset \mathbb{R}^n$ is open and $g:U\to \mathbb{R}^n\setminus \{0\}$ is locally Lipschitz. Moreover, suppose that $u:V\to \mathbb{R}$ is a normal solution to \textup{(\ref{TDE})}\textup{(\ref{TDE})}. Choose any $x\in V$ and $v\in \mathbb{R}^n$ such that $g(x)\cdot v\neq 0$, and define $c_x(t)=u(x+tv)$. Then, the following results hold.
\begin{enumerate}[\upshape (1)]
\item If $I$ is a closed interval such that $0\in I$, $c_x(t)$ is defined on $I$, and $g(x+tv)\cdot v\neq 0$ for all $t\in I$, then $c_x(t)$ is monotone on $I$.\footnote{In this paper, a real-valued function $f$ defined on an interval is said to be {\bf monotone} if it is either increasing or decreasing.}

\item In addition to the requirements in \textup{(1)}, suppose that $c_x(t)$ is increasing (resp. decreasing) on $I$. Then, there exists an open neighborhood $W$ of $x$ such that $c_y(t)$ is defined and increasing (resp. decreasing) on $I$ for all $y\in W$.

\item In addition to the requirements in \textup{(1)}, suppose that $0$ is not an endpoint of $I$, and $c_x(t)$ is increasing (resp. decreasing). If $g(x)\cdot w\neq 0$, then the sign of $g(x)\cdot w$ is the same as that of $g(x)\cdot v$ if and only if $t\mapsto u(x+tw)$ is increasing (resp. decreasing) around $0$.

\item If $V$ is connected and $g(x)\cdot v\neq 0$ for all $x\in V$, then $c_x(t)$ is increasing around $0$ for some $x\in V$ if and only if $c_x(t)$ is increasing around $0$ for all $x\in V$.

\item If $V$ is connected and there exists $x\in V$ such that $t\mapsto u(x+tg(x))$ is increasing around $0$, then for any $y\in V$, $t\mapsto u(y+tg(y))$ is also increasing around $0$.
\end{enumerate}
\end{lem}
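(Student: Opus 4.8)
The plan is to prove the five assertions in order; (2)--(5) will follow from (1) by soft compactness and connectedness arguments, and essentially all of the content sits in (1).

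\emph{Part (1).} Write $\phi=c_x$, which is continuous because $u$ is locally Lipschitz. It suffices to show that $\phi$ has no local extremum at any $t_0\in I$, since a continuous function on an interval without local extrema is strictly monotone. Fix $t_0\in I$, put $p_0=x+t_0v$, $c_0=u(p_0)$, and let $X_0=u^{-1}(c_0)$, a $C^1$ hypersurface. Because $g$ is a normal vector field of $X_0$ and $g(p_0)\cdot v\neq0$, the line $s\mapsto x+sv$ is transversal to $X_0$ at $p_0$; equivalently $v\notin T_{p_0}(X_0)=g(p_0)^{\perp}$. For small $r>0$ with $B_r(p_0)\subset V$, the embedded $C^1$ hypersurface $X_0$ meets $B_r(p_0)$ in a single connected graph, so $B_r(p_0)\setminus X_0$ has exactly two components $B^+,B^-$, on each of which $u-c_0$ has a constant nonzero sign. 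If that sign were the same on both, $u$ would have a local extremum at $p_0$, contradicting condition (5) of a normal solution; so, say, $u>c_0$ on $B^+$ and $u<c_0$ on $B^-$. Writing $X_0\cap B_r(p_0)=\rho^{-1}(0)$ for a $C^1$ function $\rho$ with $\nabla\rho(p_0)\parallel g(p_0)$, the $C^1$ function $t\mapsto\rho(x+tv)$ has nonzero derivative at $t_0$, hence changes sign there, so $x+tv$ moves from $B^-$ into $B^+$ (or the reverse) as $t$ crosses $t_0$. Therefore $\phi(t)<\phi(t_0)$ on one side of $t_0$ and $\phi(t)>\phi(t_0)$ on the other, so $t_0$ is not a local extremum of $\phi$, as required.

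\emph{Parts (2)--(5).} For (2), compactness of $\{x+tv:t\in I\}$ in the open set $V$ gives a ball $W$ about $x$ with $y+tv\in V$ for $y\in W$, $t\in I$; shrinking $W$ via continuity of $g$ keeps $g(y+tv)\cdot v$ of one (nonzero) sign for such $y,t$, so (1) makes each $c_y$ monotone on $I=[a,b]$, and the strict inequality $u(x+bv)>u(x+av)$ persists for $y$ near $x$ by continuity of $u$, pinning the direction; the decreasing case is symmetric. For (3), put $v_\theta=(1-\theta)v+\theta w$ (the case $w\parallel v$ being trivial); equal signs of $g(x)\cdot v$ and $g(x)\cdot w$ keep $g(x)\cdot v_\theta\neq0$ for all $\theta\in[0,1]$, so by (1) each $t\mapsto u(x+tv_\theta)$ is strictly monotone near $0$, and by the argument of (2) the set of $\theta$ giving ``increasing'' and the set giving ``decreasing'' are both open in $[0,1]$; connectedness and $0$ lying in the first set push $1$ into it, i.e.\ $t\mapsto u(x+tw)$ is increasing near $0$. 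Applying this to $-w$ settles the opposite-sign case, and, since by (1) $t\mapsto u(x+tw)$ is strictly monotone near $0$, the stated equivalence follows; replacing $u$ by $-u$ (again a normal solution) gives the ``decreasing'' version. For (4), on a connected $V$ with $g\cdot v\neq0$ everywhere the sets of $x$ with $c_x$ increasing, resp.\ decreasing, near $0$ are disjoint, cover $V$ by (1), and are open by (2), so one is all of $V$. For (5), let $G$ (resp.\ $D$) be the set of $y\in V$ for which $t\mapsto u(y+tg(y))$ is increasing (resp.\ decreasing) near $0$; these are disjoint and, by (1) applied in the direction $g(y)$ (legitimate since $g(y)\cdot g(y)>0$), cover $V$. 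Given $y_0\in G$, set $v_0=g(y_0)$: (2) yields a neighbourhood on which $t\mapsto u(y+tv_0)$ is increasing near $0$, and for $y$ close enough $g(y)\cdot v_0>0$, so (3) with base point $y$, direction $v_0$, and new direction $w=g(y)$ shows $t\mapsto u(y+tg(y))$ is increasing near $0$; thus $G$ is open, and likewise $D$, so connectedness of $V$ and $G\neq\emptyset$ give $G=V$.

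\emph{Main obstacle.} The only genuinely non-formal point is the exclusion of local extrema of $\phi$ in (1). It rests on the claim that near a transversal point a level set of $u$ locally separates a small ball into exactly two pieces, and that $u$ takes values on opposite sides of the level on those two pieces --- the latter being precisely where ``normal solution'' (no local extrema) is used. I expect the careful justification of this local picture --- that $X_0\cap B_r(p_0)$ really is a single connected graph and that the transversal line passes from one component to the other --- to be the step requiring the most care; once it is in place, Parts (2)--(5) are routine.
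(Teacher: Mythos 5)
Your overall strategy for (1) --- rule out interior local extrema of $c_x$ by showing the line crosses the level set from the ``below'' side to the ``above'' side, using condition (5) of normality to force opposite signs --- is the same idea as the paper's, and your treatment of (2)--(5) is correct and essentially the paper's own bookkeeping (openness plus connectedness, with the direction pinned by continuity). The problem is exactly the step you flag yourself: the claim that for small $r$ the level set $X_0$ meets $B_r(p_0)$ in a single connected graph, so that $B_r(p_0)\setminus X_0$ has precisely two components on which $u-c_0$ has constant sign. As written this is asserted, not proved, and it is the entire content of (1). Two distinct things need an argument there. First, that no ``other sheet'' of $X_0$ enters arbitrarily small balls around $p_0$: this does follow from the embedded-submanifold definition (a relatively open neighborhood $X_0\cap O$ of $p_0$ is homeomorphic, in the subspace topology, to an open subset of $\mathbb{R}^{n-1}$, so parameter values bounded away from the parameter of $p_0$ cannot accumulate at $p_0$), but it has to be said. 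Second, even once $X_0\cap B_r(p_0)$ is known to lie on one $C^1$ graph $\{y_n=E(\tilde y)\}$ in coordinates with $g(p_0)\parallel e_n$, the connectedness of the two regions $\{y_n>E(\tilde y)\}\cap B_r(p_0)$ and $\{y_n<E(\tilde y)\}\cap B_r(p_0)$ is a genuine geometric point near the rim of the ball; it is true for small $r$ because $\nabla E(\tilde p_0)=0$, but you give no argument, and with a round ball in place of a chart-adapted neighborhood there really is something to check.

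The paper's proof is arranged precisely so as to avoid this separation claim. At a putative interior extremum $t^*$ of $c_x$ it applies the local immersion theorem to the inclusion of $X=u^{-1}(u(x^*))$, obtaining a $C^1$ diffeomorphism $\varphi:W\to Z$ with $W$ \emph{convex} and $u(\varphi(w))=u(x^*)\Leftrightarrow w_n=0$; the two sides $\{w\in W: w_n>0\}$ and $\{w\in W: w_n<0\}$ are then convex, and the sign information is extracted by the intermediate value theorem along $\varphi$-images of straight segments in $W$ joining a hypothetical point with $u>u(x^*)$ to points of the line with $u<u(x^*)$ on the same side. This shows a local extremum of $c_x$ would make $x^*$ a local extremum of $u$, contradicting normality --- no ``exactly two components of a ball'' statement is ever needed. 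So either adopt that chart-based route, or supply the two missing justifications above; as it stands, part (1) of your proposal rests on an unproven (though, for small $r$, true) claim, which you correctly identify as the main obstacle but do not close.
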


\begin{proof}
Without loss of generality, we assume that $g(x)\cdot v>0$. Because $g(x+tv)\cdot v$ is continuous in $t$, $g(x+tv)\cdot v>0$ for all $t\in I$. Define $x(t)=x+tv$, and suppose that there exists $t_1,t_2\in I$ such that $t_1<t_2$ and $c_x(t_1)=c_x(t_2)$. Then, there exists $t^*\in (t_1,t_2)$ such that $t^*$ is either a local minimum point or a local maximum point of $c_x(t)$. Let $x^*=x(t^*)$, and define $X=u^{-1}(u(x^*))$. Let $i_X:X\to \mathbb{R}^n$ be the inclusion mapping: that is, $i_X(z)=z$ for all $z\in X$. Applying the local immersion theorem to this inclusion mapping, there exist an open neighborhood $W$ of $0$, an open neighborhood $Z$ of $x^*$, and a $C^1$ diffeomorphism $\varphi:W\to Z$ such that $\varphi(0)=x^*$ and for all $w\in W$, $u(\varphi(w))=u(x^*)$ if and only if $w_n=0$. Without loss of generality, we can assume that $W$ is convex.

Let $\chi=\varphi^{-1}$. Choose any $z\in T_{x^*}(X)$ and a $C^1$ curve $c(s)$ such that $c(0)=x^*$ and $\dot{c}(0)=z$. Then, $\chi_n(c(s))\equiv 0$, and thus $D\chi_n(x^*)z=0$. Hence, $\nabla \chi_n(x^*)$ is orthogonal to $T_{x^*}(X)$. Because $D\chi(x^*)$ is regular, $\nabla \chi_n(x^*)$ is non-zero, and thus there exists $a\in \mathbb{R}$ such that $a\neq 0$ and $\nabla \chi_n(x^*)=ag(x^*)$. Without loss of generality, we assume that $a>0$. Then, $\nabla \chi_n(x^*)\cdot v>0$, and thus, there exists $\delta>0$ such that $\chi_n(x(t))$ is increasing on $[t^*-\delta,t^*+\delta]$. Therefore, there exist $t_3,t_4\in [t^*-\delta,t^*+\delta]$ such that $\chi_n(t_3)<0$ and $\chi_n(t_4)>0$.

If $t^*$ is a local maximum point of $c_x(t)$, then $u(x(t_3))<u(x^*)$ and $u(x(t_4))<u(x^*)$. Suppose that there exists $z\in Z$ such that $u(z)>u(x^*)$. Then, define $f_i(s)=\varphi((1-s)\chi(z)+s\chi(x(t_i)))$. Because $W$ is convex, $f_i(s)$ is well-defined for $i\in \{3,4\}$. If $\chi_n(z)<0$, then $f_3(s)$ is a continuous function and $\chi_n(f_3(s))<0$ on $[0,1]$, and thus $u(f_3(s))\neq u(x^*)$ for all $s\in [0,1]$, which contradicts the intermediate value theorem. If $\chi_n(z)>0$, then $f_4(s)$ is a continuous function and $\chi_n(f_4(s))>0$ on $[0,1]$, which also leads a contradiction by the same arguments. Therefore, $x^*$ is a local maximum point of $u$, which is a contradiction. Symmetrically, if $t^*$ is a local minimum point of $c_x(t)$, then we can show that $x^*$ is a local minimum point of $u$, which is a contradiction. Therefore, such $t_1,t_2$ do not exist, and thus $c_x(t)$ must be monotone on the interval $I$, and (1) holds.

Next, suppose that $c_x(t)$ is increasing on $I$. Because $I$ is closed, there exists an open neighborhood $W'$ of $x$ such that $c_y(t)$ is defined on $I$ and $g(y)\cdot v\neq 0$ for all $y\in W'$. Suppose that there exists a sequence $(y^k)$ on $W'$ such that $y^k\to x$ and for all $k$, $c_{y^k}(t)$ is decreasing on $I$. Then, $c_x(t)=\lim_{k\to \infty}c_{y^k}(t)$ is nonincreasing, which is a contradiction. Therefore, there exists an open neighborhood $W$ of $x$ such that $c_y(t)$ is defined on $I$ and increasing. Symmetrically, we can show that if $c_x(t)$ is decreasing on $I$, then there exists an open neighborhood $W$ of $x$ such that $c_y(t)$ is defined on $I$ and decreasing for any $y\in W$. Hence, (2) holds.

Third, suppose that $0$ is not an endpoint of $I$, and $c_x(t)$ is increasing on $I$. If $g(x)\cdot w$ has the same sign as $g(x)\cdot v$, then define $v(s)=(1-s)v+sw$. Then, $g(x)\cdot v(s)\neq 0$ for all $s\in [0,1]$. Let $I_1$ be the set of $s\in [0,1]$ such that $t\mapsto u(x+tv(s))$ is increasing around $0$. Then, $0\in I_1$. It can be easily shown that $I_1$ and $[0,1]\setminus I_1$ are closed, and because $[0,1]$ is connected, $I_1=[0,1]$. Hence, $t\mapsto u(x+tw)$ is increasing around $0$. Conversely, if the sign of $g(x)\cdot w$ is different from that of $g(x)\cdot v$, define $v(s)=(1-s)(-v)+sw$. Then, by the same arguments as above, we can show that $t\mapsto u(x+tv(s))$ is decreasing around $0$ for all $s\in [0,1]$, and in particular, $t\mapsto u(x+tw)$ is decreasing around $0$. The case in which $c_x(t)$ is decreasing can be treated symmetrically, and thus the proof in such a case is omitted. Hence, (3) holds.

Fourth, suppose that $V$ is connected and $g(x)\cdot v\neq 0$ for all $x\in U$. Let $U_1$ be the set of all $x\in V$ such that $c_x(t)$ is increasing around $0$. By the above arguments, we have that both $U_1$ and $V\setminus U_1$ are open. Because $V$ is connected, either $U_1=V$ or $U_1=\emptyset$, and (4) holds.

Finally, suppose again that $V$ is connected and there exists $x\in V$ such that $t\mapsto u(x+tg(x))$ is increasing around $0$. Let $V_1$ be the set of all $y\in V$ such that $t\mapsto u(y+tg(y))$ is increasing around $0$. By our previous arguments, we have that $V_1$ and $V\setminus V_1$ are open. Because $x\in V_1$ and $V$ is connected, we have that $V=V_1$, and (5) holds. This completes the proof. $\blacksquare$
\end{proof}

Suppose that $u:V\to \mathbb{R}$ is a normal solution to (\ref{TDE}). We say that $u$ is a {\bf regular solution} to (\ref{TDE}) if $V$ is connected and for any $x\in V$, $t\mapsto u(x+tg(x))$ is increasing around $0$. Note that, by (5) of Lemma \ref{LEMM}, for any normal solution $u$ defined on a connected set, either $u$ or $-u$ is regular.

Next, we consider another problem related to (\ref{TDE}). Let $U\subset \mathbb{R}^n\times \mathbb{R}$ be open and convex, and a function $f:U\to \mathbb{R}^n$ be locally Lipschitz. If $f$ is differentiable at $(x,y)$, define
\[s_{ij}(x,y)=\frac{\partial f_i}{\partial x_j}(x,y)+\frac{\partial f_i}{\partial y}(x,y)f_j(x,y).\]
We say that $f$ is {\bf integrable} if $s_{ij}(x,y)=s_{ji}(x,y)$ for almost all $(x,y)\in U$. Note that, by Rademacher's theorem, $f$ is differentiable almost everywhere.

The following lemma is quite important in this article.

\begin{lem}\label{KEY}
Suppose that $U\subset \mathbb{R}^n\times \mathbb{R}$ is open and $f:U\to \mathbb{R}^n$ is locally Lipschitz. Suppose that $(x^*,y^*)\in U$ and $V$ is an open and star-convex set centered at $x^*$.\footnote{Recall that $V$ is called a star-convex set centered at $x^*$ if for any $x\in V$ and $t\in [0,1]$, $(1-t)x^*+tx\in V$.} Consider the following partial differential equation:
\begin{equation}\label{PDE}
\nabla u(x)=f(x,u(x)),\ u(x^*)=y^*,
\end{equation}
and the following ordinary differential equation:
\begin{equation}\label{ODE}
\dot{c}(t)=f((1-t)x^*+tx,c(t))\cdot (x-x^*),\ c(0)=y^*.
\end{equation}
If $f$ is integrable, then there exists a solution to \textup{(\ref{PDE})} on $V$ if and only if the domain of the solution function $c(t;x)$ of \textup{(\ref{ODE})} includes $[0,1]\times V$. Moreover, in this case, $u(x)\equiv c(1;x)$ is a unique solution to \textup{(\ref{PDE})}.
\end{lem}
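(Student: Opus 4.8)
The plan is to prove the two implications separately, obtaining the formula $u\equiv c(1;\cdot)$ from each. For the direction ``solvability of (\ref{PDE}) $\Rightarrow$ the domain of $c$ contains $[0,1]\times V$'', suppose $u$ solves (\ref{PDE}) on $V$; then $u$ is automatically $C^1$, since $\nabla u(\cdot)=f(\cdot,u(\cdot))$ is continuous. Fix $x\in V$ and set $\gamma_x(t)=u((1-t)x^*+tx)$, which is defined on $[0,1]$ by star-convexity. Differentiating, $\dot\gamma_x(t)=\nabla u((1-t)x^*+tx)\cdot(x-x^*)=f((1-t)x^*+tx,\gamma_x(t))\cdot(x-x^*)$ and $\gamma_x(0)=u(x^*)=y^*$, so $\gamma_x$ is a solution of (\ref{ODE}). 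Since the right-hand side of (\ref{ODE}) is locally Lipschitz in the state variable, the nonextendable solution $c(\cdot;x)$ extends $\gamma_x$; hence $[0,1]$ lies in its domain and $c(1;x)=\gamma_x(1)=u(x)$. As $x$ was arbitrary, $[0,1]\times V$ is contained in the domain of $c$ and $u\equiv c(1;\cdot)$; in particular (\ref{PDE}) has at most one solution on $V$.

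For the converse, assume the domain of $c$ contains $[0,1]\times V$ and define $u(x):=c(1;x)$. Because the solution function of a locally Lipschitz ODE is locally Lipschitz, $u$ is locally Lipschitz, hence differentiable almost everywhere; and $u(x^*)=y^*$ since (\ref{ODE}) with $x=x^*$ is $\dot c=0,\ c(0)=y^*$. A reparametrization argument --- substitute $s\mapsto s\tau$ into (\ref{ODE}) written for the point $(1-\tau)x^*+\tau x$ and invoke uniqueness --- yields the identity $c(t;x)=u((1-t)x^*+tx)$ on $[0,1]\times V$, so that the characteristic curve $t\mapsto((1-t)x^*+tx,\ c(t;x))$ always lies on the graph of $u$. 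It now suffices to show $\nabla u(x)=f(x,u(x))$ everywhere: a locally Lipschitz function whose almost-everywhere gradient agrees a.e.\ with a continuous function is $C^1$ with that gradient, and then $u$ solves (\ref{PDE}).

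To establish $\nabla u=f(\cdot,u(\cdot))$, assume first that $f$ is $C^1$. Fix a coordinate direction $e_j$ and a point $x=p+\sigma e_j\in V$, write $V(t,\sigma)=c(t;p+\sigma e_j)$, and let $W=\partial_\sigma V$ solve the variational equation obtained by differentiating (\ref{ODE}) in $\sigma$. Introduce $\Psi(t)=W(t)-t\,f_j((1-t)x^*+t(p+\sigma e_j),\,V(t,\sigma))$, so that $\Psi(0)=0$. A direct computation --- in which the integrability identity $s_{ij}=s_{ji}$ is used precisely to cancel the first-order ``$\partial_{x}f$'' terms against one another --- shows that $\dot\Psi(t)=B(t)\Psi(t)$ for a bounded coefficient $B(t)=\sum_i\partial_y f_i\cdot((p+\sigma e_j)_i-x_i^*)$ evaluated along the characteristic. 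Hence $\Psi\equiv 0$, and at $t=1$ this reads $\tfrac{d}{d\sigma}u(p+\sigma e_j)=W(1)=f_j(p+\sigma e_j,u(p+\sigma e_j))$; letting $e_j$ and $p$ vary gives $\nabla u=f(\cdot,u(\cdot))$. When $f$ is only Lipschitz, $V(t,\sigma)$ is merely Lipschitz in $(t,\sigma)$, so the chain-rule steps are legitimate only at points where $f$ is differentiable, and one must run the argument not at one chosen $\sigma$ but for a.e.\ $\sigma$ on a.e.\ coordinate line --- which is still enough to recover $\nabla u=f(\cdot,u(\cdot))$ a.e. For this to work, the identity $s_{ij}=s_{ji}$ (and differentiability of $f$) must hold along the characteristic ray to a.e.\ point for a.e.\ $t$, i.e.\ on the graph of $u$ off a null set.

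The obstacle --- and the technical heart of the proof --- is exactly this: ``$f$ integrable'' only provides $s_{ij}=s_{ji}$ off a Lebesgue-null set $B\subset\mathbb{R}^{n+1}$, while the graph of $u$ is itself null, so a priori it could be contained in $B$. This is resolved by a foliation argument. Vary the initial value: for $y_0$ near $y^*$ put $u^{y_0}(x)=c(1;x,y_0)$; by continuous dependence and compactness these are defined on a fixed compact exhaustion of $V$, are uniformly Lipschitz there, and --- since $y_0\mapsto c(1;x,y_0)$ is strictly monotone with a two-sided Gronwall bound --- the map $(x,y_0)\mapsto(x,u^{y_0}(x))$ is a bi-Lipschitz homeomorphism onto an open subset of $\mathbb{R}^{n+1}$. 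Pushing $B$ through the inverse of this homeomorphism and applying Fubini shows that for a.e.\ $y_0$ the graph of $u^{y_0}$ meets $B$ in a null set; a further Fubini along the radial cone of rays through $x^*$ then guarantees that the characteristic ray to a.e.\ point avoids $B$ for a.e.\ $t$. Consequently the computation of the previous paragraph applies to $u^{y_0}$, so $u^{y_0}$ solves (\ref{PDE}) for a.e.\ $y_0$ near $y^*$. Finally, picking such $y_0^k\to y^*$ and using the uniform Lipschitz bounds together with $u^{y_0^k}\to u$ and $f(\cdot,u^{y_0^k}(\cdot))\to f(\cdot,u(\cdot))$ locally uniformly, one concludes that $u$ is $C^1$ with $\nabla u=f(\cdot,u(\cdot))$, so $u$ solves (\ref{PDE}) on $V$. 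The essential difficulty throughout is this bridging between ``almost everywhere in $\mathbb{R}^{n+1}$'' and ``along the lower-dimensional characteristics''.
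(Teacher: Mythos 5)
The paper itself does not prove Lemma~\ref{KEY}: it defers the $C^1$ case to Dieudonn\'e (Theorem 10.9.4) and the locally Lipschitz case to Hosoya (2021a, 2021b), so there is no in-text argument to compare yours against line by line. Judged on its own terms, your plan is structurally sound and reconstructs the right strategy. The easy direction (restricting a solution of (\ref{PDE}) to the segment and invoking uniqueness/nonextendability), the reparametrization identity $c(t;x)=u((1-t)x^*+tx)$, and the $C^1$ computation all check out: with $\Psi(t)=W(t)-tf_j(\xi(t),V(t))$ one indeed gets $\dot\Psi=B\Psi$ with $B=\sum_i\partial_yf_i\cdot(x_i-x_i^*)$, the cancellation using exactly $s_{ij}=s_{ji}$. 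Your diagnosis of the real difficulty -- integrability holds only off a null set of $\mathbb{R}^{n+1}$ while the graph of $u$ is itself null -- is also correct, and the device you propose (vary the initial value $y_0$, use the two-sided Gronwall bound to make $(x,y_0)\mapsto(x,c(1;x,y_0))$ bi-Lipschitz, pull the bad set back, apply Fubini in $y_0$ and then radially from $x^*$, solve (\ref{PDE}) for a.e.\ $y_0$, and pass to the limit $y_0^k\to y^*$ using uniform convergence of $u^{y_0^k}$ and of $\nabla u^{y_0^k}=f(\cdot,u^{y_0^k})$) is a workable way to bridge it; the radial Fubini does give that for a.e.\ $x$ the ray from $x^*$ meets the bad slice $A_{y_0}$ in a $t$-null set, which is all your pointwise argument needs.

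The one place where the proposal asserts rather than proves is the Lipschitz analogue of the variational equation: when $f$ is differentiable only at a.e.\ $t$ along the reference characteristic, the existence of $W=\partial_\sigma V$ (as a genuine limit of difference quotients, for the particular $\sigma$ at hand and for all $t\in[0,1]$) and the fact that $W$ satisfies the integrated variational equation are not automatic and are not supplied by Rademacher applied to the solution function. This is fixable by a standard but nontrivial argument -- write the difference quotient $D_h(t)$ in integral form, use local Lipschitzness for domination, differentiability of $f$ at $(\xi(t),V(t))$ for a.e.\ $t$ for pointwise linearization, and Gronwall to get $D_h\to W$ uniformly with the limiting integral equation, after which your $\Psi$-argument runs verbatim with absolutely continuous $t\mapsto tf_j(\xi(t),V(t))$. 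Since this parameter-differentiability lemma is precisely the technical content that the cited papers of Hosoya carry out, your proof is best described as a correct outline of essentially the known route, with that one ingredient left unproven rather than wrong.
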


In Theorem 10.9.4 of Dieudonne \cite{D}, the partial differential equation (\ref{PDE}) was considered, where $U$ is a subset of two Banach spaces $X,Y$, and the value of $f$ is a bounded linear operator from $X$ into $Y$. Hence, our (\ref{PDE}) is a minor version of Dieudonne's one. However, Dieudonne assumed that $f$ is $C^1$. In contrast, we only assume that $f$ is locally Lipschitz. This extension was proved in Hosoya \cite{H21, H21B}. Hence, we omit the proof. Note that, if $f$ is $C^k$, then by the bootstrap argument, we can show that the solution $u$ to (\ref{PDE}) is $C^{k+1}$.

Next, recall Jacobi's integrability condition (\ref{J}):
\begin{equation}\label{J2}
g_i\left(\frac{\partial g_j}{\partial x_k}-\frac{\partial g_k}{\partial x_j}\right)+g_j\left(\frac{\partial g_k}{\partial x_i}-\frac{\partial g_i}{\partial x_k}\right)+g_k\left(\frac{\partial g_i}{\partial x_j}-\frac{\partial g_j}{\partial x_i}\right)=0.
\end{equation}
We use the following lemmas.

\begin{lem}\label{KEY2}
Suppose that $U\subset \mathbb{R}^n$ is open and $g:U\to \mathbb{R}^n\setminus\{0\}$ is locally Lipschitz. Choose any $x\in U$ such that $g$ is differentiable at $x$ and $g_{k^*}(x)\neq 0$. Then, the following conditions are equivalent.
\begin{enumerate}[\upshape i)]
\item $g$ satisfies \textup{(\ref{J2})} at $x$ for any $i,j,k\in \{1,...,n\}$.

\item $g$ satisfies \textup{(\ref{J2})} at $x$ for any $i,j,k\in \{1,...,n\}$ with $i\neq j\neq k\neq i$.

\item $g$ satisfies \textup{(\ref{J2})} at $x$ for any $i,j,k\in \{1,...,n\}$ with $i\neq j\neq k\neq i$ and $k=k^*$.
\end{enumerate}
\end{lem}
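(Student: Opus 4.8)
The plan is to exploit the total antisymmetry of the left-hand side of (\ref{J2}) under permutations of the index triple. Write $\omega_{ab}=\frac{\partial g_a}{\partial x_b}-\frac{\partial g_b}{\partial x_a}$ and let
\[
J_{ijk}=g_i\omega_{jk}+g_j\omega_{ki}+g_k\omega_{ij}
\]
denote the left-hand side of (\ref{J2}) evaluated at $x$. A direct check gives $\omega_{ab}=-\omega_{ba}$, that $J_{ijk}$ is invariant under cyclic permutation of $(i,j,k)$, and that it changes sign under a transposition; hence $J_{ijk}$ is totally antisymmetric in its three indices. In particular $J_{ijk}=0$ automatically whenever two of the indices coincide. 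This settles i) $\Leftrightarrow$ ii) at once: the instances of (\ref{J2}) required by i) but not by ii) are exactly those with a repeated index, and these hold trivially. The implication ii) $\Rightarrow$ iii) is a special case and needs no argument.

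It remains to prove iii) $\Rightarrow$ ii). Put $\ell=k^*$, so $g_\ell(x)\neq 0$. Fix pairwise distinct $i,j,k$; we must show $J_{ijk}=0$. If one of $i,j,k$ equals $\ell$, then by antisymmetry of $J$ we may permute the triple so that $\ell$ sits in the last slot, and then $J_{ijk}=0$ is precisely an instance of hypothesis iii). So assume $i,j,k$ are all different from $\ell$. Each of the triples $(i,j,\ell)$, $(j,k,\ell)$, $(k,i,\ell)$ then consists of three distinct indices one of which is $\ell$, so (after the same reordering) iii) applies to each; using $g_\ell(x)\neq0$ to solve for the $\omega$-term not involving $\ell$, we get
\[
g_\ell\,\omega_{ij}=g_j\,\omega_{i\ell}-g_i\,\omega_{j\ell},\qquad
g_\ell\,\omega_{jk}=g_k\,\omega_{j\ell}-g_j\,\omega_{k\ell},\qquad
g_\ell\,\omega_{ki}=g_i\,\omega_{k\ell}-g_k\,\omega_{i\ell}.
\]

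Multiplying $J_{ijk}$ by $g_\ell$ and substituting these three identities, the six resulting terms cancel in pairs, so $g_\ell\,J_{ijk}=0$; since $g_\ell(x)\neq0$ this forces $J_{ijk}=0$, which is ii). I do not expect any genuine obstacle: the proof is bookkeeping in the antisymmetry of $J$ plus one short algebraic cancellation. The only point needing a moment's care is verifying that the case analysis and reindexing are legitimate — that every index triple appearing in the argument either has a repeated index (so $J=0$ for free) or, after a suitable permutation, is an instance of iii) with its distinguished index $k^*$ in the last position.
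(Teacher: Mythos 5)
Your proposal is correct and follows essentially the same route as the paper: the degenerate (repeated-index) cases are disposed of by the antisymmetry of $J_{ijk}$, and the key step iii) $\Rightarrow$ ii) is exactly the paper's computation, which multiplies the three instances of (\ref{J2}) involving $k^*$ by $g_k$, $g_i$, $g_j$ respectively and sums to get $g_{k^*}J_{ijk}=0$ — your ``solve for the $\omega$-term and substitute'' is just a repackaging of that same cancellation, and your use of $g_{k^*}(x)\neq 0$ at the end matches the paper.
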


\begin{proof}
It is clear that i) implies ii) and ii) implies iii). Choose any $i,j,k\in \{1,...,n\}$. If $i=j$, then
\begin{align*}
&~g_i\left(\frac{\partial g_j}{\partial x_k}-\frac{\partial g_k}{\partial x_j}\right)+g_j\left(\frac{\partial g_k}{\partial x_i}-\frac{\partial g_i}{\partial x_k}\right)+g_k\left(\frac{\partial g_i}{\partial x_j}-\frac{\partial g_j}{\partial x_i}\right)\\
=&~g_i\left(\frac{\partial g_i}{\partial x_k}-\frac{\partial g_k}{\partial x_i}\right)+g_i\left(\frac{\partial g_k}{\partial x_i}-\frac{\partial g_i}{\partial x_k}\right)=0,
\end{align*}
and thus (\ref{J2}) holds. By the same reason, (\ref{J2}) must hold if $j=k$ or $i=k$. Therefore, ii) implies i).

It suffices to show that iii) implies ii). Suppose that iii) holds, and let $i\neq j\neq k\neq i$. If $k^*\in \{i,j,k\}$, then (\ref{J2}) holds by iii). Therefore, we assume that $k^*\notin\{i,j,k\}$. By iii),
\begin{align*}
&~g_i\left(\frac{\partial g_j}{\partial x_{k^*}}-\frac{\partial g_{k^*}}{\partial x_j}\right)+g_j\left(\frac{\partial g_{k^*}}{\partial x_i}-\frac{\partial g_i}{\partial x_{k^*}}\right)+g_{k^*}\left(\frac{\partial g_i}{\partial x_j}-\frac{\partial g_j}{\partial x_i}\right)=0,\\
&~g_j\left(\frac{\partial g_k}{\partial x_{k^*}}-\frac{\partial g_{k^*}}{\partial x_k}\right)+g_k\left(\frac{\partial g_{k^*}}{\partial x_j}-\frac{\partial g_j}{\partial x_{k^*}}\right)+g_{k^*}\left(\frac{\partial g_j}{\partial x_k}-\frac{\partial g_k}{\partial x_j}\right)=0,\\
&~g_k\left(\frac{\partial g_i}{\partial x_{k^*}}-\frac{\partial g_{k^*}}{\partial x_i}\right)+g_i\left(\frac{\partial g_{k^*}}{\partial x_k}-\frac{\partial g_k}{\partial x_{k^*}}\right)+g_{k^*}\left(\frac{\partial g_k}{\partial x_i}-\frac{\partial g_i}{\partial x_k}\right)=0.
\end{align*}
Multiplying $g_k$ to the first line, $g_i$ to the second line, $g_j$ to the third line, and summing them up, we obtain that
\[g_{k^*}\left[g_i\left(\frac{\partial g_j}{\partial x_k}-\frac{\partial g_k}{\partial x_j}\right)+g_j\left(\frac{\partial g_k}{\partial x_i}-\frac{\partial g_i}{\partial x_k}\right)+g_k\left(\frac{\partial g_i}{\partial x_j}-\frac{\partial g_j}{\partial x_i}\right)\right]=0,\]
and because $g_{k^*}(x)\neq 0$, (\ref{J2}) holds for $i,j,k$. This completes the proof. $\blacksquare$
\end{proof}

We say that $g$ satisfies {\bf Jacobi's integrability condition} if, for any $i,j,k\in \{1,...,n\}$, (\ref{J2}) holds almost everywhere.

\begin{lem}\label{KEY3}
Suppose that $U\subset \mathbb{R}^n$ is open and $g:U\to \mathbb{R}^n\setminus\{0\}$ is locally Lipschitz. Choose $x\in U$ such that $g$ is differentiable at $x$ and $g_n(x)\neq 0$, and for $i\in \{1,...,n-1\}$, define
\[f_i(y)=-\frac{g_i(y)}{g_n(y)}.\]
Then, $g$ satisfies \textup{(\ref{J2})} for any $i,j,k\in \{1,...,n\}$ at $x$ if and only if $s_{ij}(x)=s_{ji}(x)$ for any $i,j\in \{1,...,n-1\}$.
\end{lem}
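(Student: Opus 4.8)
The plan is to deduce the equivalence from Lemma \ref{KEY2} together with one algebraic identity. First note that $s_{ij}(x)=s_{ji}(x)$ is automatic when $i=j$, so only the off-diagonal pairs $i\neq j$ in $\{1,\dots,n-1\}$ carry content. Next, apply Lemma \ref{KEY2} at $x$ with $k^*=n$ (legitimate since $g_n(x)\neq 0$): condition (\ref{J2}) at $x$ for all $i,j,k\in\{1,\dots,n\}$ is equivalent to (\ref{J2}) at $x$ for all triples with $i\neq j\neq k\neq i$ and $k=n$, and these are precisely the triples $(i,j,n)$ with $i,j\in\{1,\dots,n-1\}$, $i\neq j$. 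Hence it suffices to show that, for each such pair, $s_{ij}(x)=s_{ji}(x)$ if and only if (\ref{J2}) holds at $x$ for the triple $(i,j,n)$.

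For this I would compute directly. Since $g$ is locally Lipschitz it is continuous, so $g_n\neq 0$ on a neighborhood of $x$; combined with the differentiability of $g$ at $x$, each $f_i=-g_i/g_n$ is differentiable at $x$, and the quotient rule gives, at $x$,
\[
\frac{\partial f_i}{\partial x_j}=\frac{-g_n\,\frac{\partial g_i}{\partial x_j}+g_i\,\frac{\partial g_n}{\partial x_j}}{g_n^2},\qquad \frac{\partial f_i}{\partial x_n}=\frac{-g_n\,\frac{\partial g_i}{\partial x_n}+g_i\,\frac{\partial g_n}{\partial x_n}}{g_n^2}.
\]
Substituting these into $s_{ij}=\frac{\partial f_i}{\partial x_j}+\frac{\partial f_i}{\partial x_n}f_j$ (where the last coordinate $x_n$ plays the role of the scalar variable in the definition of $s_{ij}$), using $f_j=-g_j/g_n$, and clearing the common denominator, I expect to obtain at $x$ the identity
\[
g_n^2\left(s_{ij}-s_{ji}\right)=-\left[g_i\left(\frac{\partial g_j}{\partial x_n}-\frac{\partial g_n}{\partial x_j}\right)+g_j\left(\frac{\partial g_n}{\partial x_i}-\frac{\partial g_i}{\partial x_n}\right)+g_n\left(\frac{\partial g_i}{\partial x_j}-\frac{\partial g_j}{\partial x_i}\right)\right],
\]
that is, $g_n^2(s_{ij}-s_{ji})$ equals minus the left-hand side of (\ref{J2}) for the triple $(i,j,n)$.

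Because $g_n(x)^2>0$, this identity yields $s_{ij}(x)=s_{ji}(x)$ if and only if (\ref{J2}) holds at $x$ for $(i,j,n)$; combined with the reduction of the first paragraph, this gives the stated equivalence. I do not expect a genuine obstacle: the whole argument rests on the displayed identity, whose verification is routine. The only points needing a little care are justifying the quotient-rule differentiation of $f_i$ from the mere pointwise differentiability of $g$ (rather than $C^1$ regularity near $x$), and tracking signs so that the expanded expression for $g_n^2(s_{ij}-s_{ji})$ is correctly matched with the $(i,j,n)$ instance of Jacobi's condition.
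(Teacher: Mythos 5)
Your proposal is correct and follows essentially the same route as the paper: a quotient-rule computation showing that $g_n^2\,(s_{ji}-s_{ij})$ equals the $(i,j,n)$-instance of (\ref{J2}) (your displayed identity and its sign are right), followed by an appeal to Lemma \ref{KEY2} with $k^*=n$. The only difference is presentational---the paper carries out the algebraic expansion explicitly rather than asserting it as routine---so no gap remains.
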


\begin{proof}
Choose any $i,j\in \{1,...,n-1\}$. By direct calculation,
\begin{align*}
s_{ij}=&~\frac{\partial f_i}{\partial x_j}+\frac{\partial f_i}{\partial x_n}f_j\\
=&~-\frac{\frac{\partial g_i}{\partial x_j}g_n-g_i\frac{\partial g_n}{\partial x_j}}{g_n^2}+\frac{\frac{\partial g_i}{\partial x_n}g_n-g_i\frac{\partial g_n}{\partial x_n}}{g_n^2}\times\frac{g_j}{g_n}\\
=&~\frac{1}{g_n^2}\left[-g_n\frac{\partial g_i}{\partial x_j}+g_i\frac{\partial g_n}{\partial x_j}+g_j\frac{\partial g_i}{\partial x_n}\right]-\frac{g_ig_j\frac{\partial g_n}{\partial x_n}}{g_n^3}.
\end{align*}
Therefore,
\begin{align*}
&~s_{ij}=s_{ji}\Leftrightarrow s_{ji}-s_{ij}=0\\
\Leftrightarrow&~\frac{1}{g_n^2}\left[g_n\left(\frac{\partial g_i}{\partial x_j}-\frac{\partial g_j}{\partial x_i}\right)+g_i\left(\frac{\partial g_j}{\partial x_n}-\frac{\partial g_n}{\partial x_j}\right)+g_j\left(\frac{\partial g_n}{\partial x_i}-\frac{\partial g_i}{\partial x_n}\right)\right]=0.
\end{align*}
Hence, by Lemma \ref{KEY2}, our claim is correct. This completes the proof. $\blacksquare$
\end{proof}

By Lemma \ref{KEY3}, if $g_n(x)\neq 0$ for any $x\in U$, then $g$ satisfies Jacobi's integrability condition if and only if $f=-\frac{g_i}{g_n}$ is integrable. This result is quite important for our main result.

The next lemma is needed to prove the next proposition.\footnote{In the rest of this section, we use the following notation: if $x\in \mathbb{R}^n$, then $\tilde{x}$ denotes $(x_1,...,x_{n-1})\in \mathbb{R}^{n-1}$. In particular, $\tilde{x}^*=(x_1^*,...,x_{n-1}^*)$.}

\begin{lem}\label{EQ}
Let $U\subset \mathbb{R}^n$ be open and $g:U\to \mathbb{R}^n\setminus \{0\}$ be locally Lipschitz. Choose $x^*\in U$ such that $g_n(x^*)\neq 0$. Suppose that there exists a normal solution $u:V\to \mathbb{R}$ to \textup{(\ref{TDE})} around $x^*$ such that $u$ is increasing in $x_n$ around $x^*$, and choose any $\varepsilon>0$ such that $V'=\prod_{i=1}^n(x_i^*-\varepsilon,x_i^*+\varepsilon)\subset V$ and $g_n(x)\neq 0$ for any $x\in V'$. Choose any $\delta>0$ such that $\delta<\varepsilon$ and for any $\tilde{x}\in \prod_{i=1}^{n-1}(x_i^*-\delta,x_i^*+\delta)$ and $x_n\in (x_n^*-\delta,x_n^*+\delta)$,
\[u(\tilde{x},x_n^*-\varepsilon)<u(\tilde{x}^*,x_n)<u(\tilde{x},x_n^*+\varepsilon)\]
Define $W=\prod_{i=1}^{n-1}(x_i^*-\delta,x_i^*+\delta)$, $I=(x_n^*-\delta,x_n^*+\delta)$, and $W'=W\times I$. Then, for each $x_n\in I$, there uniquely exists a $C^1$ function $E^{x_n}:W\to (x_n^*-\varepsilon,x_n^*+\varepsilon)$ such that $u(\tilde{x},E^{x_n}(\tilde{x}))=u(\tilde{x}^*,x_n)$ for any $x\in W'$. Moreover, if we define $f_i(x)=-\frac{g_i(x)}{g_n(x)}$, then
\begin{equation}\label{LEVV}
\frac{\partial E^{x_n}}{\partial x_i}(\tilde{x})=f_i(\tilde{x},E^{x_n}(\tilde{x})),\ E^{x_n}(\tilde{x}^*)=x_n.
\end{equation}
Furthermore, if $g$ is differentiable at $x^*$, then $g$ satisfies \textup{(\ref{J2})} for any $i,j,k\in \{1,...,n\}$ at $x^*$.
\end{lem}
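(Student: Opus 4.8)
I would prove the three assertions of the lemma in turn: first the existence, uniqueness and $C^1$-smoothness of $E^{x_n}$; then the first-order system (\ref{LEVV}); and finally Jacobi's condition at $x^*$. Fix $x_n\in I$ and put $c=u(\tilde{x}^*,x_n)$ and $X=u^{-1}(c)$, which is nonempty since $(\tilde{x}^*,x_n)\in V'$. For each $\tilde{x}\in W$, I apply Lemma \ref{LEMM}(1) to the curve $s\mapsto u(\tilde{x},s)$ along the segment $\{\tilde{x}\}\times[x_n^*-\varepsilon,x_n^*+\varepsilon]$, on which $g\cdot e_n=g_n$ does not vanish, to conclude that this curve is monotone; since the hypothesis gives $u(\tilde{x},x_n^*-\varepsilon)<u(\tilde{x}^*,x_n)<u(\tilde{x},x_n^*+\varepsilon)$, it is strictly increasing, so it attains the value $c$ at a unique point of $(x_n^*-\varepsilon,x_n^*+\varepsilon)$, which I define to be $E^{x_n}(\tilde{x})$. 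Continuity of $E^{x_n}$ then follows by a standard subsequence argument from the continuity of $u$ and this uniqueness.

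For the $C^1$ property I would not appeal to an implicit function theorem for $u$ (which is only locally Lipschitz), but instead use that $X$ is an $(n-1)$-dimensional $C^1$ manifold on which $g$ is a normal vector field, both being part of the definition of a normal solution. At $p=(\tilde{y},E^{x_n}(\tilde{y}))\in X$, which lies in $V'$ so that $g_n(p)\neq 0$, the projection $\pi:X\to\mathbb{R}^{n-1}$, $(\tilde{z},z_n)\mapsto\tilde{z}$, has injective derivative: a vector of $T_p(X)$ annihilated by $d\pi_p$ is a multiple of $e_n$, and $g(p)\cdot e_n=g_n(p)\neq 0$ forces it to vanish; since $\dim X=n-1$ as well, $d\pi_p$ is bijective. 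By the inverse function theorem for manifolds, $\pi|_X$ is, near $p$, a $C^1$ diffeomorphism onto an open subset of $\mathbb{R}^{n-1}$, and its inverse is a $C^1$ section $\tilde{z}\mapsto(\tilde{z},\psi_n(\tilde{z}))$ of $\pi$ satisfying $u(\tilde{z},\psi_n(\tilde{z}))=c$; comparing $\psi_n$ with $E^{x_n}$ via the strict monotonicity of $s\mapsto u(\tilde{z},s)$ shows the two coincide near $\tilde{y}$, hence $E^{x_n}\in C^1(W)$. For (\ref{LEVV}), fix $\tilde{y}\in W$ and $i\le n-1$: the $C^1$ curve $t\mapsto(\tilde{y}+te_i,E^{x_n}(\tilde{y}+te_i))$, with $e_i$ the $i$-th unit vector of $\mathbb{R}^{n-1}$ placed in the first block, lies in $X$ and has velocity $e_i+\frac{\partial E^{x_n}}{\partial x_i}(\tilde{y})e_n\in T_p(X)$ at $t=0$; orthogonality of $g(p)$ to $T_p(X)$ gives $g_i(p)+\frac{\partial E^{x_n}}{\partial x_i}(\tilde{y})g_n(p)=0$, i.e. $\frac{\partial E^{x_n}}{\partial x_i}(\tilde{y})=f_i(\tilde{y},E^{x_n}(\tilde{y}))$, while $E^{x_n}(\tilde{x}^*)=x_n$ follows from $u(\tilde{x}^*,E^{x_n}(\tilde{x}^*))=c=u(\tilde{x}^*,x_n)$ and strict monotonicity in the last variable.

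Finally, assume $g$ is differentiable at $x^*$, so that $f_i=-g_i/g_n$ is differentiable at $x^*$ because $g_n(x^*)\neq 0$. I take $x_n=x_n^*$ and write $E=E^{x_n^*}$, so $E(\tilde{x}^*)=x_n^*$ and $\partial_iE(\tilde{y})=f_i(\tilde{y},E(\tilde{y}))$ on $W$. Since $\tilde{y}\mapsto(\tilde{y},E(\tilde{y}))$ is $C^1$ with value $x^*$ at $\tilde{x}^*$ and $f_i$ is differentiable there, the chain rule makes each $\partial_iE$ differentiable at $\tilde{x}^*$, with $\partial_j\partial_iE(\tilde{x}^*)=\frac{\partial f_i}{\partial x_j}(x^*)+\frac{\partial f_i}{\partial x_n}(x^*)f_j(x^*)=s_{ij}(x^*)$, using $\partial_jE(\tilde{x}^*)=f_j(x^*)$. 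As every first partial of $E$ exists on $W$ and is differentiable at $\tilde{x}^*$, the classical theorem of Young on symmetry of mixed partial derivatives gives $\partial_j\partial_iE(\tilde{x}^*)=\partial_i\partial_jE(\tilde{x}^*)$, that is, $s_{ij}(x^*)=s_{ji}(x^*)$ for all $i,j\in\{1,\dots,n-1\}$; Lemma \ref{KEY3} then yields that $g$ satisfies (\ref{J2}) at $x^*$ for all $i,j,k$.

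\textbf{Main obstacle.} The delicate step is the $C^1$-regularity of $E^{x_n}$: since $u$ is merely locally Lipschitz there is no implicit function theorem to invoke, and the regularity must be extracted from the $C^1$-manifold structure of the level set together with the non-vanishing of $g_n$, which is exactly what makes the coordinate projection a local diffeomorphism of the level set. A secondary subtlety is that, $E^{x_n}$ being only $C^1$, the symmetry of second derivatives needed for Jacobi's condition is available only at the single point $x^*$ and has to be obtained from the differentiability of the first partials there, via Young's theorem, rather than from a $C^2$ hypothesis.
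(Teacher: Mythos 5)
Your proposal is correct and follows essentially the same route as the paper: monotonicity in $x_n$ plus the intermediate value theorem for existence and uniqueness, the coordinate projection of the $C^1$ level set (invertible since $g_n\neq 0$ and $g$ is normal to the level set) together with the inverse function theorem for the $C^1$-regularity of $E^{x_n}$, orthogonality of $g$ to tangent vectors for (\ref{LEVV}), and symmetry of the mixed second partials of $E^{x_n^*}$ at $\tilde{x}^*$ combined with Lemma \ref{KEY3} for Jacobi's condition. The only difference is cosmetic: you make explicit, via Young's theorem, the pointwise symmetry of second derivatives that the paper subsumes in its assertion that $E^{x_n^*}$ is twice differentiable at $x^*$.
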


\begin{proof}
By Lemma \ref{LEMM}, we have that $u$ is increasing in $x_n$ on $V'$. Therefore, the `uniqueness' claim of this lemma is obvious.

Choose any $x\in W'$. By the intermediate value theorem, there exists $E^{x_n}(\tilde{x})\in (x_n^*-\varepsilon,x_n^*+\varepsilon)$ such that $u(\tilde{x},E^{x_n}(\tilde{x}))=u(\tilde{x}^*,x_n)$. Define $z=(\tilde{x},E^{x_n}(\tilde{x}))$ and $X=u^{-1}(u(z))$. By our assumption, $X$ is an $n-1$ dimensional $C^1$ manifold. For each $y\in X\cap V'$, define $\chi(y)=\tilde{y}$. Then, $\chi^{-1}(w)=(w,E^{x_n}(w))$ around $\tilde{x}$. Because $\chi$ is a restriction of the linear mapping $y\mapsto \tilde{y}$ from $\mathbb{R}^n$ into $\mathbb{R}^{n-1}$, $\chi$ is $C^{\infty}$. Since $g_n(z)\neq 0$ and $g(z)$ is tangent to $T_z(X)$, $e_n\notin T_z(X)$. Choose a basis $v_1,...,v_{n-1}$ of $T_z(X)$. Suppose that $a_1,...,a_{n-1}\in \mathbb{R}$ and
\[a_1\tilde{v}_1+...+a_{n-1}\tilde{v}_{n-1}=0.\]
Then, there exists $a_n\in \mathbb{R}$ such that
\[a_1v_1+...+a_{n-1}v_{n-1}=a_ne_n.\]
Because $e_n\notin T_z(X)$, $a_n=0$, and thus $a_1=...=a_{n-1}=0$. Therefore, $\tilde{v}_1,...,\tilde{v}_{n-1}$ are linearly independent, and $d\chi_z:T_z(X)\to \mathbb{R}^{n-1}$ is a bijection. By the inverse function theorem, we have that $\chi^{-1}$ is $C^1$ around $\tilde{x}$, and in particular, $E^{x_n}$ is $C^1$ around $\tilde{x}$.

Choose any $\tilde{x}\in W$ and $x_n\in I$, and let $z=(\tilde{x},E^{x_n}(\tilde{x}))$. Because $c_i(t)=(\tilde{x}+t\tilde{e}_i,E^{x_n}(\tilde{x}+t\tilde{e}_i))$ is a $C^1$ curve on $X$, we have that $\dot{c}_i(0)\in T_y(X)$, and thus $\dot{c}_i(0)\cdot g(y)=0$. Hence,
\[g_i(z)+g_n(z)\frac{\partial E^{x_n}}{\partial x_i}(\tilde{x})=0,\]
and thus $E^{x_n}$ satisfies (\ref{LEVV}).

If $g$ is differentiable at $x^*$, then $E^{x_n^*}$ is twice differentiable at $x^*$. By a simple calculation, we have that
\[s_{ij}(x^*)=\frac{\partial^2E^{x_n^*}}{\partial x_j\partial x_i}(\tilde{x}^*)=\frac{\partial^2E^{x_n^*}}{\partial x_i\partial x_j}(\tilde{x}^*)=s_{ji}(x^*).\]
By Lemma \ref{KEY3}, for each $i,j,k\in \{1,...,n\}$, (\ref{J2}) holds at $x^*$. This completes the proof. $\blacksquare$
\end{proof}

\subsection{Invariance of the Level Sets}
Using our Lemmas, we can show the following proposition.

\begin{prop}\label{prop2}
Suppose that $U\subset \mathbb{R}^n$ is open and $g:U\to \mathbb{R}^n\setminus\{0\}$ is locally Lipschitz. If $u_1:V_1\to \mathbb{R}$ and $u_2:V_2\to \mathbb{R}$ are two normal solutions to \textup{(\ref{TDE})} around $x^*$, then there exists an open neighborhood $V_3\subset V_1\cap V_2$ of $x^*$ such that $u_2$ is a monotone (increasing or decreasing) transform of $u_1$ on $V_3$. In particular, $u_1^{-1}(u_1(x^*))\cap V_3=u_2^{-1}(u_2(x^*))\cap V_3$.
\end{prop}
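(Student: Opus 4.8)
The plan is to reduce the statement to the uniqueness assertion in Lemma~\ref{KEY}. The solutions $u_1,u_2$ themselves satisfy $\nabla u_\ell=\lambda_\ell g$, not an equation of the form~(\ref{PDE}), so Lemma~\ref{KEY} does not apply to them directly; but by Lemma~\ref{EQ} the graphs describing their level sets \emph{do} solve such an equation, with a right-hand side depending only on $g$, and it is these graphs I would compare. First I would normalize: since $g(x^*)\neq 0$, after relabeling coordinates assume $g_n(x^*)\neq 0$ and fix a box $V'=\prod_{i=1}^n(x_i^*-\varepsilon,x_i^*+\varepsilon)\subseteq V_1\cap V_2$ on which $g_n$ is nowhere zero. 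For each $\ell$, Lemma~\ref{LEMM}(1) makes $u_\ell$ monotone along every vertical segment in $V'$, and Lemma~\ref{LEMM}(4) with $v=e_n$ (legitimate because $V'$ is connected and $g(\cdot)\cdot e_n=g_n$ is nowhere zero there) makes these monotonicities all point the same way; replacing $u_\ell$ by $-u_\ell$ if needed --- still a normal solution --- I may assume each $u_\ell$ is increasing in $x_n$ on $V'$, which costs nothing since it only composes $u_1,u_2$ with $a\mapsto-a$.

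Next I would record that $f_i=-g_i/g_n$ is integrable near $x^*$: by Rademacher's theorem $g$ is differentiable at a.e.\ $x\in V'$, and there $u_1$ is a normal solution around $x$ that is increasing in $x_n$, so the final clause of Lemma~\ref{EQ} (with $x$ in the role of $x^*$) gives that~(\ref{J2}) holds at $x$. Hence $g$ satisfies Jacobi's integrability condition on $V'$, and since $g_n\neq 0$ there, the remark after Lemma~\ref{KEY3} shows that $f=(f_1,\dots,f_{n-1})$ is locally Lipschitz and integrable on $V'$, regarded as an open subset of $\mathbb R^{n-1}\times\mathbb R$. Now apply Lemma~\ref{EQ} to $u_1$ and to $u_2$, shrinking its $\delta$ so the conclusions hold for both at once: this yields a box $W=\prod_{i=1}^{n-1}(x_i^*-\delta,x_i^*+\delta)$, an interval $I=(x_n^*-\delta,x_n^*+\delta)$, and for each $x_n\in I$ two $C^1$ maps $E_1^{x_n},E_2^{x_n}:W\to(x_n^*-\varepsilon,x_n^*+\varepsilon)$ with $u_\ell(\tilde x,E_\ell^{x_n}(\tilde x))=u_\ell(\tilde x^*,x_n)$ and
\[\nabla E_\ell^{x_n}(\tilde x)=f(\tilde x,E_\ell^{x_n}(\tilde x)),\qquad E_\ell^{x_n}(\tilde x^*)=x_n.\]
For each fixed $x_n$, both $E_1^{x_n}$ and $E_2^{x_n}$ solve the partial differential equation~(\ref{PDE}) on the star-convex box $W$ with identical data; since $f$ is integrable on $V'\supseteq W\times(x_n^*-\varepsilon,x_n^*+\varepsilon)$, the uniqueness part of Lemma~\ref{KEY} forces $E_1^{x_n}\equiv E_2^{x_n}$ on $W$. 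Call this common map $E^{x_n}$.

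Finally I would assemble the monotone transform. The map $\Psi(\tilde x,x_n)=(\tilde x,E^{x_n}(\tilde x))$ on $W\times I$ is continuous, satisfies $\Psi(x^*)=x^*$, and is injective, because for fixed $\tilde x$ the assignment $x_n\mapsto E^{x_n}(\tilde x)$ picks out the unique $y$ with $u_1(\tilde x,y)=u_1(\tilde x^*,x_n)$ and is therefore strictly increasing in $x_n$ (both $u_1(\tilde x,\cdot)$ and $u_1(\tilde x^*,\cdot)$ being continuous and strictly increasing). A routine argument --- invariance of domain, or joint continuity of $E^{x_n}(\tilde x)$ together with strict monotonicity in $x_n$ --- then shows $\Psi$ carries a neighborhood of $x^*$ onto an open neighborhood $V_3\subseteq V'$ of $x^*$. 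For $x\in V_3$, writing $x=\Psi(\tilde x,x_n)$ gives $u_1(x)=u_1(\tilde x^*,x_n)$ and, because $E^{x_n}=E_2^{x_n}$, also $u_2(x)=u_2(\tilde x^*,x_n)$; hence $u_2=\phi\circ u_1$ on $V_3$ with $\phi=u_2(\tilde x^*,\cdot)\circ(u_1(\tilde x^*,\cdot))^{-1}$ a strictly increasing homeomorphism of intervals. Undoing the normalization (at worst composing $\phi$ with $a\mapsto-a$) shows $u_2$ is a monotone transform of $u_1$ on $V_3$, and injectivity of $\phi$ gives $u_1^{-1}(u_1(x^*))\cap V_3=u_2^{-1}(u_2(x^*))\cap V_3$.

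I expect the main obstacle to be the middle step: seeing that the level-set graphs of the two solutions obey one and the same first-order PDE with one and the same initial value, and that this PDE is uniquely solvable. The uniqueness rests on the integrability of $f$ near $x^*$, which is not hypothesized and must be drawn out of the mere existence of a normal solution via Rademacher's theorem and the final clause of Lemma~\ref{EQ}. Everything else is bookkeeping: fixing orientations, choosing a common domain on which all the objects of Lemma~\ref{EQ} exist for both solutions, and patching the slicewise identities $E_1^{x_n}=E_2^{x_n}$ into a single monotone reparametrization.
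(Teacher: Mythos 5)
Your proposal is correct and takes essentially the same route as the paper's proof: you normalize so both solutions are increasing in $x_n$, extract Jacobi's condition (hence integrability of $f=-g_i/g_n$, via Lemma \ref{KEY3}) from Rademacher's theorem and the last clause of Lemma \ref{EQ}, realize the level sets of $u_1$ and $u_2$ as solutions $E_1^{x_n},E_2^{x_n}$ of (\ref{LEVV}) with common data, identify them through Lemma \ref{KEY}, and read off the monotone transform from $c\mapsto u_\ell(\tilde{x}^*,c)$. The only cosmetic differences are that the paper pins both graphs to the ODE solution function $z(1;x)$ of (\ref{LEV}) and defines $V_3$ as a preimage under $u_1$, whereas you invoke the uniqueness clause of Lemma \ref{KEY} directly and obtain $V_3$ as the image of your map $\Psi$.
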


\begin{proof}
Because $g(x^*)\neq 0$, we can assume without loss of generality that $g_n(x^*)\neq 0$. First, choose $\varepsilon>0$ such that $V^*\equiv \prod_{i=1}^n[x_i^*-\varepsilon,x_i^*+\varepsilon]\subset V_1\cap V_2$ and $g_n(x)\neq 0$ for any $x\in V^*$. Replacing $u_i$ with $-u_i$ if necessary, we can assume without loss of generality that $u_i$ is increasing in $x_n$ on $V^*$.

By Lemma \ref{EQ}, $g$ satisfies Jacobi's integrability condition in the interior of $V^*$. Define $f_i=-\frac{g_i}{g_n}$, and consider the following parametrized differential equation:
\begin{equation}\label{LEV}
\dot{z}(t)=f((1-t)\tilde{x}+t\tilde{x}^*,z(t))\cdot (\tilde{x}^*-\tilde{x}),\ z(0)=x_n.
\end{equation}
If $x=x^*$, then $z(t)\equiv x_n^*$ is a solution defined on $\mathbb{R}$. Thus, there exists $\delta>0$ such that $\delta<\varepsilon$, the solution function $z(t;x)$ of (\ref{LEV}) is defined on $[0,1]\times \prod_{j=1}^n[x_j^*-\delta,x_j^*+\delta]$, and $|z(t;x)-x_n^*|<\varepsilon$ for all $t\in [0,1]$ and $x\in \prod_{j=1}^n[x_j^*-\delta,x_j^*+\delta]$. Without loss of generality, we can assume that, for any $x\in \prod_{j=1}^n[x_j^*-\delta,x_j^*+\delta]$ and $i\in \{1,2\}$,
\[u_i(\tilde{x},x_n^*-\varepsilon)<u_i(\tilde{x}^*,x_n)<u_i(\tilde{x},x_n^*+\varepsilon).\]
Define $W=\prod_{j=1}^{n-1}(x_j^*-\delta,x_j^*+\delta)$, $I=(x_n^*-\delta,x_n^*+\delta)$, and $W'=W\times I$. Choose any $x\in W'$. By Lemma \ref{EQ}, for each $i\in \{1,2\}$, there uniquely exists $E_i^{x_n}(\tilde{x})\in (x_n^*-\varepsilon,x_n^*+\varepsilon)$ such that $u_i(\tilde{x},E_i^{x_n}(\tilde{x}))=u_i(\tilde{x}^*,x_n)$. Moreover, each $E_i^{x_n}$ is $C^1$, and it is a solution to (\ref{LEVV}). By Lemma \ref{KEY},
\[E_i^{x_n}(\tilde{x})=z(1;x),\]
where the right-hand side is independent of $i$. Hence, $E_1^{x_n}=E_2^{x_n}$ on $W$. Let
\[V_3=\prod_{j=1}^n(x_j^*-\delta,x_j^*+\delta)\cap u_1^{-1}(\{u_1(\tilde{x}^*,x_n)|x_n\in I\}).\]
Then, $V_3$ is an open neighborhood of $x^*$. For any $x\in V_3$, there uniquely exists $\psi(x)\in (x_n^*-\delta,x_n^*+\delta)$ such that
\[u_1(x)=u_1(\tilde{x}^*,\psi(x)),\]
which implies that
\[E_i^{\psi(x)}(\tilde{x})=x_n,\]
and thus,
\[u_2(x)=u_2(\tilde{x}^*,\psi(x)).\]
Let $\phi_i(c)=u_i(\tilde{x}^*,c)$. Then, for each $x\in V_3$,
\[u_2(x)=\phi_2(\psi(x))=(\phi_2\circ \phi_1^{-1})(\phi_1(\psi(x)))=(\phi_2\circ \phi_1^{-1})(u_1(x)),\]
and thus $u_2$ is a monotone transform of $u_1$ on $V_3$, as desired. This completes the proof. $\blacksquare$
\end{proof}

When some additional requirements hold, then this result can be extended.

\begin{cor}\label{COR}
Suppose that $U\subset \mathbb{R}^n$ is open and $g:U\to \mathbb{R}^n\setminus \{0\}$ is locally Lipschitz. Suppose also that $u_1$ and $u_2$ are two normal solutions to \textup{(\ref{TDE})} defined on the same connected domain $V$, and any level set of $u_i$ is connected. Then, $u_2$ is a monotone (increasing, or decreasing) transform of $u_1$ on $V$.
\end{cor}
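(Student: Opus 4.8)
The plan is to localize with Proposition~\ref{prop2} and then glue the local transition maps into a single global one, the connectedness hypotheses being exactly what makes the gluing work. Write $J_1 = u_1(V)$ and $J_2 = u_2(V)$; since $V$ is connected and the $u_i$ are continuous, $J_1$ and $J_2$ are intervals, and since a normal solution has no local extremum, each $u_i$ is nonconstant, so these intervals are non-degenerate.

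I would first prove that $u_1$ and $u_2$ have exactly the same level sets. Fix a value $a \in J_1$ and set $L = u_1^{-1}(a)$, which is nonempty and, by hypothesis, a connected $C^1$ manifold. For each $x \in L$, Proposition~\ref{prop2} supplies an open neighborhood $V_3(x) \subseteq V$ of $x$ on which $u_1^{-1}(u_1(x))$ and $u_2^{-1}(u_2(x))$ coincide; in particular $u_2$ is constant, equal to $u_2(x)$, on $L \cap V_3(x)$. Thus $u_2|_L$ is locally constant, hence constant because $L$ is connected. The symmetric argument, using connectedness of the level sets of $u_2$, shows that $u_1$ is constant on each level set of $u_2$. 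Combining the two, $u_1(x) = u_1(y)$ if and only if $u_2(x) = u_2(y)$ for all $x,y \in V$, so the assignment $h(u_1(x)) := u_2(x)$ is well defined and yields a bijection $h : J_1 \to J_2$ with $u_2 = h \circ u_1$ on $V$.

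Next I would show that $h$ is monotone. By Proposition~\ref{prop2} again, each $x_0 \in V$ has an open neighborhood $V_3$ on which $u_2$ is a monotone transform of $u_1$, say $u_2 = k \circ u_1$ on $V_3$ with $k$ monotone. Intersecting $V_3$ with a small ball $B$ around $x_0$ and using that $x_0$ is not a local extremum of $u_1$, the image $u_1(B)$ is an interval containing $a_0 = u_1(x_0)$ in its interior, and on $u_1(B)$ the maps $h$ and $k$ agree; hence $h$ is monotone on a neighborhood in $J_1$ of every point of $J_1$. Since $h$ is injective (immediate from the previous step), its local direction of monotonicity cannot switch: the set of points of $J_1$ near which $h$ is increasing and the set of points near which $h$ is decreasing are open, disjoint (a common point would force $h$ constant on a nondegenerate interval, contradicting injectivity), and cover $J_1$, so by connectedness of the interval $J_1$ one of them is empty. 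A function that is increasing (resp.\ decreasing) on a neighborhood of every point of an interval is increasing (resp.\ decreasing) on the whole interval; therefore $h$ is monotone, which is precisely the assertion that $u_2$ is a monotone transform of $u_1$ on $V$.

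The crux is the first step. Proposition~\ref{prop2} only identifies the level sets of $u_1$ and $u_2$ locally, and without connectedness of those level sets there would be no reason for $u_2$ to be globally constant along a level set of $u_1$: the two foliations could agree leaf-by-leaf near each point yet be matched up incompatibly in the large. Connectedness of the level sets removes this obstruction, and connectedness of $V$ is then what guarantees that $J_1$ is an interval, so that ``locally monotone with non-switching direction'' upgrades to ``monotone''. Everything else is routine point-set topology.
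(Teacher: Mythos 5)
Your proof is correct, and its first half is essentially the paper's: both localize with Proposition~\ref{prop2} and use connectedness of the level sets to see that $u_2$ is constant on each level set of $u_1$ and vice versa (the paper phrases this via path-connectedness of the level manifold and local constancy of $t\mapsto u_2(c(t))$; your ``locally constant on a connected set is constant'' is the same idea, slightly cleaner), after which the transition map $h$ with $u_2=h\circ u_1$ is well defined and injective. Where you genuinely diverge is in upgrading local to global monotonicity of $h$. The paper first normalizes both solutions to be regular (Lemma~\ref{LEMM}(5)) and uses regularity to prove that $\phi$ is \emph{continuous} --- approximating nearby values along the curve $t\mapsto u_1(x+tg(x))$ --- and then rules out $\phi(c)\ge\phi(d)$ for $c<d$ by producing an interior extremum, which contradicts local increasingness. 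You avoid continuity and regularity altogether: using the no-local-extremum property of a normal solution to see that $u_1(B)$ is a neighborhood of $u_1(x_0)$ in $J_1$, you get that $h$ is monotone near every point of the interval $J_1$, fix the direction by injectivity plus connectedness of $J_1$, and invoke the fact that a function monotone (in a fixed direction) on a neighborhood of every point of an interval is monotone on the whole interval --- a fact that indeed needs no continuity (a Lebesgue-number/finite-chain argument proves it), though it would be worth a line of proof rather than a bare assertion. The payoff of your route is that it dispenses with the continuity step and the WLOG reduction to regular solutions; the paper's route yields, in addition, the continuity of the transition function and the sharper conclusion that for regular solutions the transform is increasing.
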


\begin{proof}
Without loss of generality, we can assume that each $u_i$ is regular. First, we show that for any $x\in V$, $u_1^{-1}(u_1(x))=u_2^{-1}(u_2(x))$. Let $X=u_1^{-1}(u_1(x))$. Because $u_1$ is a normal solution to (\ref{TDE}), $X$ is an $n-1$ dimensional connected $C^1$ manifold. Note that, for a topological manifold, connectedness is equivalent to path connectedness.\footnote{In general, path connectedness implies connectedness. To prove the converse, choose any $y\in X$ and define $U_1$ be the set of all $z$ such that there exists a continuous function $c:[0,1]\to X$ with $c(0)=y, c(1)=z$. Then, we can easily show that $U_1$ is clopen whenever $X$ is a topological manifold. Hence, if $X$ is connected, then $U_1=X$, as desired.} Hence, $X$ is path connected. Choose any $y\in X$ and let $c:[0,1]\to X$ be a continuous function such that $c(0)=x, c(1)=y$. Define $d(t)=u_2(c(t))$. Choose any $t^*\in [0,1]$ and define $x^*=x(t^*)$. By Proposition \ref{prop2}, $d(t)$ is locally constant around $t^*$. Because $t^*$ is arbitrary, we conclude that $d$ is constant and $d(1)=d(0)$, which implies that $u_2(y)=u_2(x)$. Symmetrically, we can show that if $u_2(y)=u_2(x)$, then $u_1(y)=u_1(x)$, and thus $X=u_2^{-1}(u_2(x))$, as desired.

Let $I=u_1(V)$, and for each $c\in I$, define $\phi(c)=u_2(u_1^{-1}(c))$. Due to the above relationship, the definition of $\phi$ has no ambiguity, and $u_2=\phi\circ u_1$. Choose any $c\in I$ and suppose that $(c_k)$ is a sequence in $I$ that converges to $c$. Choose any $x\in u_1^{-1}(c)$. Then, there exists $(t_k)$ such that $t_k\to 0$ and for any sufficiently large $k$, $u_1(x+t_kg(x))=c_k$. Therefore, 
\[\phi(c_k)=u_2(x+t_kg(x))\to u_2(x)=\phi(c),\]
which implies that $\phi$ is continuous. Because of Proposition \ref{prop2}, we have that $\phi(c)$ is locally increasing. If $\phi(c)\ge \phi(d)$ for some $c,d\in I$ with $c<d$, then there exists an extremum of $\phi$ in $(c,d)$, which contradicts the above arguments. Therefore,  $\phi$ is increasing, as desired. This completes the proof. $\blacksquare$
\end{proof}

\begin{cor}\label{COR2}
Suppose that $I\subset \mathbb{R}$ is an open interval, and let $U=I^n$ and $g:U\to \mathbb{R}^n_+\setminus \{0\}$ is locally Lipschitz. Then, any regular solution $u$ to (\ref{TDE}) defined on $U$ is increasing, and any level set of $u$ is connected. In particular, if $u_1:U\to \mathbb{R}$ and $u_2:U\to \mathbb{R}$ are normal solutions to (\ref{TDE}), then $u_2$ is a monotone transform of $u_1$.
\end{cor}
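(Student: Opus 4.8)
For orientation: the statement has three parts --- (a) a regular solution $u$ is increasing, (b) each level set of $u$ is connected, (c) any two normal solutions on $U$ are monotone transforms of one another --- and the plan is to prove (a) and (b) directly and then obtain (c) from Corollary~\ref{COR}. For (a), fix $x,y\in U$ with $x\gg y$ and set $v=x-y\gg 0$. Since $U=I^{n}$ is convex, the segment $\gamma(t)=(1-t)y+tx$ stays in $U$, and because $g(\gamma(t))\in\mathbb{R}^{n}_{+}\setminus\{0\}$ while $v\gg 0$ we have $g(\gamma(t))\cdot v=\sum_{i}g_{i}(\gamma(t))v_{i}>0$ for every $t\in[0,1]$; hence $c_{y}(t)=u(y+tv)$ is monotone on $[0,1]$ by part~(1) of Lemma~\ref{LEMM}. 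To fix the direction, recall that regularity makes $t\mapsto u(y+tg(y))$ increasing around $0$, and $g(y)\cdot v>0$ has the same sign as $g(y)\cdot g(y)=\|g(y)\|^{2}>0$, so part~(3) of Lemma~\ref{LEMM}, applied at $y$ with reference direction $g(y)$ and comparison direction $v$, shows $t\mapsto u(y+tv)$ is increasing around $0$. Being monotone on $[0,1]$ and increasing near $0$, $c_{y}$ is increasing on $[0,1]$, so $u(x)=c_{y}(1)>c_{y}(0)=u(y)$; thus $u$ is increasing. Letting $\varepsilon\downarrow 0$ in $u(x)<u(y+\varepsilon\mathbf{1})$ --- valid for small $\varepsilon>0$ whenever $x\le y$, where $\mathbf{1}=(1,\dots,1)$ --- also gives that $u$ is nondecreasing: $x\le y$ implies $u(x)\le u(y)$.

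For (b), fix $c$ with $X:=u^{-1}(c)\neq\emptyset$ and put $X_{-}=\{u<c\}$, $X_{+}=\{u>c\}$; both are nonempty since $u(x-\varepsilon\mathbf{1})<c<u(x+\varepsilon\mathbf{1})$ for $x\in X$ and small $\varepsilon>0$. The key point is that $X_{-}$ is path-connected: given $p,q\in X_{-}$, the point $m$ with $m_{i}=\min\{p_{i},q_{i}\}$ lies in $I^{n}$ and satisfies $m\le p$, hence $u(m)\le u(p)<c$; one joins $p$ to $m$ and then $m$ to $q$ by changing one coordinate at a time, each elementary segment staying inside the box $U$ and staying $\le p$ (resp.\ $\le q$) componentwise, hence inside $X_{-}$. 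The same argument with componentwise maxima shows $X_{+}$ is connected. Since moreover $X\subseteq\overline{X_{-}}\cap\overline{X_{+}}$ (because $x\mp\varepsilon\mathbf{1}\in X_{\mp}$ for $x\in X$), the sets $C_{-}:=\{u\le c\}$ and $C_{+}:=\{u\ge c\}$ are closed and connected (each lies between $X_{\pm}$ and its closure), with $C_{-}\cup C_{+}=U$ and $C_{-}\cap C_{+}=X$.

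It remains to deduce that $X$ itself is connected. Here $X$, being a level set with the nowhere-zero normal field $g$, is a properly embedded, two-sided $C^{1}$ hypersurface in $U$ whose two sides are precisely $X_{-}$ and $X_{+}$. Suppose $X=A\sqcup B$ with $A,B$ nonempty and clopen in $X$. Then $U\setminus A=(X_{-}\cup B)\cup(X_{+}\cup B)$ is connected --- a union of two connected sets meeting in $B\neq\emptyset$ --- and likewise $U\setminus B$ is connected; these are open, cover $U$, and their intersection is $(U\setminus A)\cap(U\setminus B)=X_{-}\sqcup X_{+}$, which is disconnected. This is impossible when the ambient set $U=I^{n}$ is simply connected (equivalently, one can run a $\mathbb{Z}/2$-intersection count: a path from a point of $X_{-}$ to a point of $X_{+}$ inside $U\setminus A$ meets $X$, hence meets $B$, an odd number of times, and concatenating with the analogous path in $U\setminus B$ produces a null-homotopic loop with odd mod-$2$ intersection number against the closed hypersurface $A$, a contradiction). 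Hence $X$ is connected. Finally, for (c), replace each $u_{i}$ by $-u_{i}$ if necessary so that both are regular (remark after Lemma~\ref{LEMM}); parts (a) and (b) then show each $u_{i}$ is increasing with connected level sets, so Corollary~\ref{COR} applies on the connected domain $U$ and yields that $u_{2}$ is a monotone transform of $u_{1}$ (undoing the sign changes preserves this).

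I expect the connectedness of the level sets to be the only real obstacle. The monotonicity statements are routine once Lemma~\ref{LEMM} is in hand, and (c) is immediate from Corollary~\ref{COR}; but passing from ``$X_{-}$ and $X_{+}$ are connected'' to ``$X$ is connected'' genuinely uses that $U=I^{n}$ is simply connected, and one must check with some care that $X_{-}$ and $X_{+}$ really are the two local sides of the hypersurface $X$. The hypothesis $g\ge 0$ enters exactly through the lattice argument for $X_{\pm}$, which in turn rests on $u$ being increasing.
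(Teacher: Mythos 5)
Your proposal is correct, and parts (a) and (c) coincide with the paper's own argument (monotonicity along directions $v\gg 0$ via Lemma \ref{LEMM}, then Corollary \ref{COR}); but your proof of connectedness of the level sets takes a genuinely different route. The paper never passes through the sub- and superlevel sets: given $x\neq y$ with $u(x)=u(y)$ it constructs a path inside the level set itself, moving from $x$ to the componentwise minimum $z$ and then to $y$ while correcting each point $x(t)$ by a multiple $s(t)$ of $v=w-z\gneq 0$ chosen so that $u(x(t)+s(t)v)=u(x)$; continuity of the selection $t\mapsto s(t)$ (Berge's maximum theorem) makes $t\mapsto x(t)+s(t)v$ a continuous path in $u^{-1}(u(x))$, so everything stays elementary and within the paper's toolkit. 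You instead show by the same lattice trick that $X_-=\{u<c\}$ and $X_+=\{u>c\}$ are path-connected and then transfer connectedness to $X=u^{-1}(c)$ by a separation argument: a nontrivial clopen splitting $X=A\sqcup B$ would give open connected sets $P=U\setminus A$, $Q=U\setminus B$ covering $U$ with disconnected intersection $X_-\sqcup X_+$, impossible since $U=I^n$ is simply connected. That impossibility is a genuine theorem (van Kampen, or Mayer--Vietoris: $H_1(U)=0$ and $\tilde H_0(P)=\tilde H_0(Q)=0$ force $\tilde H_0(P\cap Q)=0$), so your argument is sound, but it imports algebraic topology that the paper avoids, and it is the cleaner justification of the two you offer: the parenthetical mod-$2$ intersection count would need additional care (transversality and homotopy invariance of intersection numbers for a merely $C^1$, noncompact though properly embedded hypersurface) before it stands on its own. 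What your route buys is robustness: it only uses that $u$ is nondecreasing with respect to $\le$, whereas the paper's construction invokes strict monotonicity along $v=w-z$ to get uniqueness of $s(t)$; what the paper's route buys is a short, constructive, self-contained path lying entirely in the level set.
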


\begin{proof}
Suppose that $u$ is a regular solution to (\ref{TDE}). Let $v\gg 0$. Then, for any $x\in U$, $g(x)\cdot v>0$. By Lemma \ref{LEMM}, we have that $t\mapsto u(x+tv)$ is increasing around $0$. If $y\gg x$, then define $c(t)=u((1-t)x+ty)$. Then, $c(t)$ is continuous and locally increasing on $[0,1]$, which implies that it is globally increasing. Therefore, $u(y)=c(1)>c(0)=u(x)$, and thus $u$ is increasing.

Second, choose $x,y\in U$ such that $x\neq y$ and $u(x)=u(y)$, and let $X=u^{-1}(u(x))$. Let $z_i=\min\{x_i,y_i\}$ and $w_i=\max\{x_i,y_i\}+1$. Define
\[x(t)=\begin{cases}
(1-t)x+tz & \mbox{if }0\le t\le 1,\\
(2-t)z+(t-1)y & \mbox{if }1\le t\le 2,
\end{cases}\]
and $v=w-z$. Then, $v\gg 0$, and because $u$ is increasing and continuous, for each $t\in [0,2]$, there uniquely exists $s(t)\in [0,1]$ such that $u(x(t)+s(t)v)=u(x)$. By Berge's theorem, we can show that $s(t)$ is continuous, and thus $c(t)=x(t)+s(t)v$ is a continuous function from $[0,2]$ into $X$ such that $c(0)=x$ and $c(2)=y$, which implies that $X$ is path connected, and thus it is connected.

Hence, if $u_1,u_2$ are two normal solutions to (\ref{TDE}), then they satisfy the connectedness requirement of level sets in Corollary \ref{COR}, and by this corollary, $u_2$ is a monotone transform of $u_1$. This completes the proof. $\blacksquare$
\end{proof}

\subsection{Main Result}
The main result is as follows.

\begin{thm}\label{MAIN}
Suppose that $U\subset \mathbb{R}^n$ is open and $g:U\to \mathbb{R}^n\setminus \{0\}$ is locally Lipschitz. Then, $g$ satisfies Jacobi's integrability condition if and only if, for any $x^*\in U$, there exists a normal solution $u:V\to \mathbb{R}$ to \textup{(\ref{TDE})} around $x^*$. Moreover, when $g$ is $C^k$, then it satisfies Jacobi's integrability condition if and only if, for any $x^*\in U$, there exists a $C^k$ normal solution $u:V\to \mathbb{R}$ to \textup{(\ref{TDE})} around $x^*$ such that any level set of $u$ is an $n-1$ dimensional $C^{k+1}$ manifold.
\end{thm}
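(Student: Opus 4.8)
The plan is to prove the two implications separately, placing the bulk of the work in an explicit construction of a normal solution. In both directions I begin with the same normalization: after permuting coordinates we may assume $g_n(x^*)\neq 0$; fix a box $V^*=\prod_{i=1}^{n}(x_i^*-\varepsilon,x_i^*+\varepsilon)\subset U$ on which $g_n$ never vanishes, and put $f_i=-g_i/g_n$ on $V^*$ for $i=1,\dots,n-1$, so that $f=(f_1,\dots,f_{n-1})$ is locally Lipschitz on $V^*$, now regarded as an open subset of $\mathbb{R}^{n-1}\times\mathbb{R}$. For the \emph{necessity} direction, assume a normal solution exists around every point. By Rademacher's theorem it suffices to verify (\ref{J2}) at each $x$ where $g$ is differentiable. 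Given such an $x$, permute coordinates so that $g_n(x)\neq 0$ and take a normal solution $u$ around $x$; since $g(x)\cdot e_n=g_n(x)\neq 0$, Lemma \ref{LEMM}(1) shows $t\mapsto u(x+te_n)$ is monotone near $0$, and after replacing $u$ by $-u$ if needed we may assume it is increasing, i.e.\ $u$ is increasing in $x_n$ around $x$. Then the final assertion of Lemma \ref{EQ} applies and yields (\ref{J2}) at $x$ for all $i,j,k$; since $g$ is differentiable almost everywhere, $g$ satisfies Jacobi's integrability condition.

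For \emph{sufficiency} I construct $u$ by integrating the leaves one at a time. Assuming Jacobi's integrability condition, Lemma \ref{KEY3} and the remark following it show that $f$ is integrable on $V^*$. For each height $y$ near $x_n^*$, I apply Lemma \ref{KEY} to the equation $\nabla E=f(\cdot,E)$, $E(\tilde x^*)=y$: the constant curve $c\equiv y$ solves the associated ordinary differential equation (\ref{ODE}) on all of $[0,1]$ when $\tilde x=\tilde x^*$, so by openness of the domain of the solution function (and compactness in $y$) there are a box $W\ni\tilde x^*$ and $\eta>0$ with the solution function $c(t;\tilde x,y)$ defined on $[0,1]\times W\times(x_n^*-\eta,x_n^*+\eta)$ and staying inside $V^*$; Lemma \ref{KEY} then produces, for each such $y$, the unique solution $E^y(\tilde x)=c(1;\tilde x,y)$ on $W$, which satisfies $E^y(\tilde x^*)=y$ and $\nabla E^y=f(\cdot,E^y)$ everywhere. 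Because the right-hand side of (\ref{ODE}) is Lipschitz in the state variable with a uniform constant $L$ as $\tilde x$ ranges over a compact subset of $W$, a two-sided Gronwall estimate gives $e^{-L}\le(E^{y_1}(\tilde x)-E^{y_2}(\tilde x))/(y_1-y_2)\le e^{L}$ whenever $y_1>y_2$, so $y\mapsto E^y(\tilde x)$ is, for each $\tilde x$, a strictly increasing homeomorphism onto an open interval. I then define $u$ on a neighborhood of $x^*$ implicitly by $u(\tilde x,x_n)=y\iff E^y(\tilde x)=x_n$; equivalently $u$ is the last coordinate of the inverse of the continuous injection $G(\tilde x,y)=(\tilde x,E^y(\tilde x))$, whose image $V$ is open by invariance of domain and contains $x^*$.

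It remains to verify that $u$ is a normal solution and to add the $C^k$ refinement. Each level set $u^{-1}(c)$ equals the graph of $E^c$, which is $C^1$ because $\nabla E^c=f(\cdot,E^c)$ is continuous, hence an $(n-1)$-dimensional $C^1$ manifold; $g$ is normal to it since the tangent vectors $(e_i,\partial_iE^c)$ satisfy $g\cdot(e_i,\partial_iE^c)=g_i+g_nf_i=0$; $u$ has no local extremum because $x_n\mapsto u(\tilde x,x_n)$ is strictly increasing; this same strictly increasing inverse has difference quotients in $[e^{-L},e^{L}]$, so wherever $u$ is differentiable one has $\partial u/\partial x_n\ge e^{-L}>0$ and $\nabla u\neq 0$; and $u$ is locally Lipschitz because $G$ is bi-Lipschitz near $(\tilde x^*,x_n^*)$ --- this follows by combining the two-sided Gronwall bound with the uniform bound $|\nabla E^y|\le\sup_W|f|$, through a short case split on the relative sizes of $|\tilde x_1-\tilde x_2|$ and $|y_1-y_2|$. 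Finally, if $g$ is $C^k$, then $f$ is $C^k$ on $V^*$, the solution function $c(t;\tilde x,y)$ is jointly $C^k$, each $E^y$ is $C^{k+1}$ by the bootstrap remark after Lemma \ref{KEY}, and $G$ is $C^k$ with block-triangular Jacobian whose diagonal blocks are the identity and the scalar $\partial_yE^y\ge e^{-L}>0$, hence invertible; the inverse function theorem makes $G^{-1}$, and therefore $u$, of class $C^k$, while the level sets remain graphs of the $C^{k+1}$ functions $E^c$.

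The step I expect to be the main obstacle is not the differential-geometric checking but the regularity bookkeeping forced by the bare local-Lipschitz hypothesis: one must show the leafwise solutions $E^y$ fit together into a single function defined on an honest $n$-dimensional neighborhood of $x^*$, and that this function inherits local Lipschitz continuity and a nonvanishing gradient wherever it is differentiable. Everything rests on the two-sided Gronwall control of the flow of (\ref{ODE}) --- the lower bound is precisely what prevents the leaves $E^y$ from touching and what reappears as condition (6) in the definition of a normal solution --- together with a routine but slightly delicate bi-Lipschitz and invariance-of-domain argument used to invert $G$.
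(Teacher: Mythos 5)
Your proposal is correct and follows essentially the same route as the paper: necessity via Lemma \ref{EQ} together with Rademacher's theorem, and sufficiency by reducing to $f_i=-g_i/g_n$, applying Lemma \ref{KEY} to the family of leaf equations $E^y$, and inverting in the height parameter to define $u$, with the same bootstrap and inverse/implicit-function argument for the $C^k$ refinement. Your two-sided Gronwall bound and the invariance-of-domain inversion of $G$ are only technical variants of the paper's uniqueness-plus-intermediate-value estimates on an explicit product box.
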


\begin{proof}
Recall that, if $x\in \mathbb{R}^n$, then the symbol $\tilde{x}$ denotes $(x_1,...,x_{n-1})$.

Suppose that, for any $x^*\in U$, there exists a normal solution to (\ref{TDE}) around $x^*$. Then, by Lemma \ref{EQ}, $g$ satisfies Jacobi's integrability condition. Therefore, it suffices to show the converse. Suppose that $g$ satisfies Jacobi's integrability condition. Choose $x^*\in U$. Because $g(x^*)\neq 0$, we can assume without loss of generality that $g_n(x^*)\neq 0$. Define
\[f_i(x)=-\frac{g_i(x)}{g_n(x)}.\]
By Lemma \ref{KEY3}, $f$ is defined whenever $g_n(x)\neq 0$, and is integrable. Consider the following differential equation:
\begin{equation}\label{ODE2}
\dot{c}(t)=f((1-t)\tilde{x}^*+t\tilde{x},c(t))\cdot (\tilde{x}-\tilde{x}^*),\ c(0)=z.
\end{equation}
Let $c(t;\tilde{x},z)$ be the solution function of (\ref{ODE2}). It is clear that $c(t;\tilde{x}^*,x_n^*)\equiv x_n^*$, which is defined on $\mathbb{R}$. Because the domain $U^*$ of the solution function $c$ is open and includes $[0,1]\times \{x^*\}$, there exists $\varepsilon>0$ such that $[0,1]\times \prod_{i=1}^n[x_i^*-\varepsilon,x_i^*+\varepsilon]\subset U^*$. Define
\[C=\{c(t;\tilde{x},z)|t\in [0,1],\ |x_i-x_i^*|\le \varepsilon,\ |z-x_n^*|\le \varepsilon\}.\]
Then, $C$ is compact, and thus there exists a constant $L>0$ such that $f$ is $L$-Lipschitz on $\prod_{i=1}^{n-1}[x_i^*-\varepsilon,x_i^*+\varepsilon]\times C$ and $c$ is $L$-Lipschitz on $[0,1]\times \prod_{i=1}^n[x_i^*-\varepsilon,x_i^*+\varepsilon]$.

Because $f$ is integrable, for each $z\in [x_n^*-\varepsilon,x_n^*+\varepsilon]$, $E^z(\tilde{x})=c(1;\tilde{x},z)$ is the unique $C^1$ solution to the following differential equation:
\begin{equation}\label{PDE3}
\frac{\partial E}{\partial x_i}(\tilde{x})=f_i(\tilde{x},E(\tilde{x})),\ E(\tilde{x}^*)=z
\end{equation}
defined on $\prod_{i=1}^{n-1}(x_i^*-\varepsilon,x_i^*+\varepsilon)$. If $z>z'$ and $c(t;\tilde{x},z)\le c(t;\tilde{x},z')$ for some $t\in [0,1]$, then by the intermediate value theorem, there exists $t^*\in [0,1]$ such that $c(t^*;\tilde{x},z)=c(t^*;\tilde{x},z')$. By the uniqueness of the solution to (\ref{ODE2}), we have that $z=c(0;\tilde{x},z)=c(0;\tilde{x},z')=z'$, which is a contradiction. Therefore, $c(t;\tilde{x},z)$ is increasing in $z$, and in particular, $E^z(\tilde{x})$ is increasing in $z$. Because the mapping $c(t;x)$ is continuous in $(t,x)$, $E^z(\tilde{x})$ is continuous in $(z,\tilde{x})$. Hence, there exists $\delta>0$ such that $\delta<\varepsilon$ and for any $\tilde{x}\in \prod_{i=1}^{n-1}[x_i^*-\delta,x_i^*+\delta]$, $\|\tilde{x}-\tilde{x}^*\|\le \frac{1}{2L^2}$ and
\[E^{x_n^*-\varepsilon}(\tilde{x})<x_n^*-\delta<x_n^*+\delta<E^{x_n^*+\varepsilon}(\tilde{x}).\]
By the intermediate value theorem, if $|x_i-x_i^*|\le \delta$ for all $i\in \{1,...,n\}$, then there uniquely exists $z^*\in (x_n^*-\varepsilon,x_n^*+\varepsilon)$ such that $E^{z^*}(\tilde{x})=x_n$. Define $u(x)$ as such $z^*$. Let $V=\prod_{i=1}^n(x_i^*-\delta,x_i^*+\delta)$, and $\bar{V}=\prod_{i=1}^n[x_i^*-\delta,x_i^*+\delta]$. Then, for each $x\in \bar{V}$,
\[u(x)=z^*\Leftrightarrow E^{z^*}(\tilde{x})=x_n\Leftrightarrow c(1;\tilde{x},z^*)=x_n.\]
Therefore, $\tilde{x}\mapsto (\tilde{x},E^{z^*}(\tilde{x}))$ is a $C^1$ local parametrization of $u^{-1}(z^*)\cap V$. Hence, $X\equiv u^{-1}(z^*)$ is an $n-1$ dimensional $C^1$ manifold, and for $v_i=e_i+f_i(x)e_n$, $\{v_1,...,v_{n-1}\}$ consists of the basis of $T_x(X)$. However,
\[v_i\cdot g(x)=g_i(x)+f_i(x)g_n(x)=0,\]
which implies that $g$ is a normal vector field of $X$.

Next, we show that $u$ is Lipschitz on $V$. To show this, we first show that $u$ is Lipschitz in $x_n$ on $V$. Suppose that $\tilde{x}\in \prod_{i=1}^{n-1}[x_i^*-\delta,x_i^*+\delta]$, and let $\tilde{x}(t)=(1-t)\tilde{x}^*+t\tilde{x}$. If $z,z'\in [x_n^*-\varepsilon,x_n^*+\varepsilon]$ and $z'>z$, then $(\tilde{x}(t),c(t;\tilde{x},z)),(\tilde{x}(t),c(t;\tilde{x},z'))\in \prod_{i=1}^{n-1}[x_i^*-\varepsilon,x_i^*+\varepsilon]\times C$ for any $t\in [0,1]$. Moreover, $c(t;\tilde{x},z')>c(t;\tilde{x},z)$ for all $t\in [0,1]$. Thus,
\begin{align*}
&~c(1;\tilde{x},z')-c(1;\tilde{x},z)\\
=&~z'-z+\int_0^1(f(\tilde{x}(t),c(t;\tilde{x},z'))-f(\tilde{x}(t),c(t;\tilde{x},z)))\cdot (\tilde{x}-\tilde{x}^*)dt\\
\ge&~z'-z-\int_0^1\|\tilde{x}-\tilde{x}^*\|L(c(t;\tilde{x},z')-c(t;\tilde{x},z))dt\\
\ge&~\frac{1}{2}(z'-z).
\end{align*}
Therefore, if $(\tilde{x},x_n),(\tilde{x},x_n')\in \bar{V}$ and $x_n'>x_n$, then for $z=u(\tilde{x},x_n)$ and $z'=u(\tilde{x},x_n')$, $z'>z$ and
\[x_n'-x_n=c(1;\tilde{x},z')-c(1;\tilde{x},z)\ge \frac{z'-z}{2},\]
which implies that
\[|u(\tilde{x},x_n')-u(\tilde{x},x_n)|\le 2|x_n'-x_n|,\]
as desired.

Let $x\in V$ and define $z=u(x)$. By definition, $E^z(\tilde{x})=x_n$ and $\nabla E^z(\tilde{x})=f(x)$. Therefore, there exists $\delta'>0$ such that if $|h|<\delta'$, then for each $i\in \{1,...,n-1\}$, $x+he_i\in V$ and $x_n(h)=E^z(\tilde{x}+h\tilde{e}_i)\in [x_n-(|f_i(x)|+1)|h|,x_n+(|f_i(x)|+1)|h|]$. Let $x(h)=(\tilde{x}+h\tilde{e}_i,x_n(h))$. By definition, $u(x(h))=z$. Because $u$ is Lipschitz in $x_n$, there exists a constant $L'>0$ such that for any $x\in V$ and $h\in (-\delta',\delta')$,
\[|u(x+he_i)-u(x)|=|u(x+he_i)-u(x(h))|\le L'(\max_{y\in \bar{V}}\|f(y)\|+1)|h|,\]
which implies that $u$ is Lipschitz on $V$.

Because $u$ is increasing in $x_n$, it has no local minimum/maximum point. Suppose that $u$ is differentiable at $x\in V$, and $\nabla u(x)=0$. Then, there exists $h>0$ such that $(\tilde{x},x_n+h)\in V$ and
\[0<u(\tilde{x},x_n+h)-u(x)<L^{-1}h.\]
Define $z=u(x)$ and $z(h)=u(\tilde{x},x_n+h)$. Then,
\[h=x_n+h-x_n=c(1:\tilde{x},z(h))-c(1;\tilde{x},z)\le L(z(h)-z)<h,\]
which is a contradiction. Therefore, $\nabla u(x)\neq 0$.

To summarize these results, we obtain a normal solution $u:V\to \mathbb{R}$ to (\ref{TDE}) around $x^*$. Therefore, the first assertion of this theorem holds. Now, suppose that $g$ is $C^k$. Then, $f$ is also $C^k$. By the bootstrap argument, we can show that $E^z$ is $C^{k+1}$. Therefore, for any $z$, $u^{-1}(z)$ is an $n-1$ dimensional $C^{k+1}$ manifold. Choose $x\in V$. Then, for each $z\in (x_n^*-\varepsilon,x_n^*+\varepsilon)$,
\[u(x)=z\Leftrightarrow c(1;\tilde{x},z)=x_n.\]
By the above argument, $c$ is $C^k$ and $\frac{\partial c}{\partial z}(1;\tilde{x},z)>0$, and by the implicit function theorem, we have that $u$ is $C^k$. This completes the proof. $\blacksquare$
\end{proof}

Suppose that $g:U\to \mathbb{R}^n\setminus\{0\}$ is $C^k$ and satisfies Jacobi's integrability condition. By Theorem \ref{MAIN}, there exists a $C^k$ normal solution to (\ref{TDE}) that has a $C^{k+1}$ level set. By Proposition \ref{prop2}, any normal solution to (\ref{TDE}) has a $C^{k+1}$ level set. However, as mentioned in the introduction, we cannot show that there exists a $C^{k+1}$ normal solution to (\ref{TDE}).

\section{Application}
Sometimes, we want to consider some optimization problem of a normal solution $u$ to (\ref{TDE}). To solve some optimization problems, the convexity/concavity of the objective function is desirable. However, the requirement ``any normal solution $u$ to (\ref{TDE}) is convex'' is too strong. That is, if $\phi:\mathbb{R}\to \mathbb{R}$ is $C^1$ and $\phi'(x)>0$ for any $x$, then for any normal solution $u$ to (\ref{TDE}), $v=\phi\circ u$ is also a normal solution to (\ref{TDE}). Although $u$ is convex, $v$ might be non-convex. Therefore, this requirement is not effective for our purpose.

An alternative requirement is as follows: ``any normal solution to (\ref{TDE}) is quasi-convex''. In general, if $u$ is quasi-convex and $\phi$ is increasing, then $v=\phi\circ u$ is also quasi-convex, and thus the above problem vanishes in this requirement.

However, of course, this relaxation of the requirement reduces the available techniques. For example, since the usual subdifferential is not defined for quasi-convex functions, theorems like the KKT theorem using subderivative conditions cannot be verified. Readers may consider that the following requirement ``there exists a convex normal solution to (\ref{TDE})'' is more suitable. However, this condition is a little stronger than our requirement. In the appendix, we see an example of $g$ in which any normal solution to (\ref{TDE}) on $U$ is quasi-convex, but there is no convex normal solution to (\ref{TDE}) defined on $U$.

In fact, any normal solution to (\ref{TDE}) is locally Lipschitz by definition, and for a locally Lipschitz function, the Clarke derivative can be defined, even if it is non-convex. Hence, the KKT theorem using the Clarke derivatives remains applicable. However, we present a slightly stronger result in which the Clarke derivative is not used. Thus, using the strong properties of convex functions is unnecessary, at least in this context.

We show the following theorem.

\begin{thm}\label{KKT}
Consider the following optimization problem:
\begin{align}
\min~~~~~&~u(x)\nonumber \\
\mathrm{subject\ to\ }&~x\in U,\nonumber \\
&~h_1(x)\le 0,\label{MP}\\
&~\vdots \nonumber \\
&~h_k(x)\le 0.\nonumber
\end{align}
Suppose that $U\subset \mathbb{R}^n$ is open and convex, $h_i$ is convex on $U$, and there exists a locally Lipschitz function $g:U\to \mathbb{R}^n\setminus \{0\}$ such that $u:U\to \mathbb{R}$ is a regular solution to \textup{(\ref{TDE})}. Moreover, suppose that there exists $x^+\in U$ such that $h_i(x^+)<0$ for all $i\in \{1,...,k\}$.\footnote{This condition is sometimes called {\bf Slater's condition}.} If $u$ is quasi-convex, then $x^*\in U$ is a solution to \textup{(\ref{MP})} if and only if it satisfies all constraints  $h_i(x^*)\le 0$ and there exists $\lambda_1,...,\lambda_k\ge 0$ such that\footnote{The former condition is called the {\bf Karush--Kuhn--Tucker condition}, or abbreviatedly, the {\bf KKT condition}. The latter condition is called the {\bf complementary slackness condition}.}
\[-g(x^*)\in \sum_{i=1}^k\lambda_i\partial h_i(x^*),\]
and for all $i\in \{1,...,k\}$,
\[\lambda_ih_i(x^*)=0.\]
\end{thm}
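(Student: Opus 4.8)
The plan is to reduce the quasi-convex problem to a convex one by exploiting the structure forced by the total differential equation. The key observation is that a regular solution $u$ to (\ref{TDE}) is differentiable a.e.\ with $\nabla u(x)=\lambda(x)g(x)$ for a positive $\lambda$, and more importantly, at every point $x$ where $u$ is differentiable the vector $g(x)$ is (a positive multiple of) the gradient; even where $u$ fails to be differentiable, $g(x)$ points in the direction of increase of $u$ along the line $t\mapsto u(x+tg(x))$ (this is precisely the regularity condition, via Lemma \ref{LEMM}). So $g$ plays the role of a ``gradient field'' for $u$. Since $u$ is quasi-convex and locally Lipschitz, the lower contour sets $L(x^*)=\{x\in U\mid u(x)\le u(x^*)\}$ are convex, and $g(x^*)$ should be an inner normal to $L(x^*)$ at $x^*$ when $x^*$ is on its boundary — i.e.\ $g(x^*)\cdot(y-x^*)\ge 0$ for all $y$ with $u(y)\le u(x^*)$, with a strict-type statement when $u(y)<u(x^*)$.

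Granting that normal-cone characterization, the argument runs as follows. For the ``if'' direction: suppose $x^*$ is feasible, $-g(x^*)\in\sum_i\lambda_i\partial h_i(x^*)$ with $\lambda_i\ge 0$, and $\lambda_i h_i(x^*)=0$. Let $x$ be any feasible point. Using the subgradient inequality for each $h_i$ and complementary slackness, $\sum_i\lambda_i\langle p_i, x-x^*\rangle\le \sum_i\lambda_i(h_i(x)-h_i(x^*))=\sum_i\lambda_i h_i(x)\le 0$, where $p_i\in\partial h_i(x^*)$ are chosen so that $-g(x^*)=\sum_i\lambda_i p_i$. Hence $g(x^*)\cdot(x-x^*)\ge 0$. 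If $u(x)<u(x^*)$, the strict normal-cone statement would give $g(x^*)\cdot(x-x^*)<0$ (or we push along the segment from $x$ toward $x^*$ and use that $u$ is strictly increasing in the $g$-direction to get a contradiction), so $u(x)\ge u(x^*)$; thus $x^*$ is optimal. For the ``only if'' direction: suppose $x^*$ solves (\ref{MP}). Then the convex set $L(x^*)$ (lower contour set, which is convex by quasi-convexity) and the feasible set $F=\{x\in U\mid h_i(x)\le 0\ \forall i\}$ (convex, with nonempty interior by Slater) have the property that $F\cap \mathrm{int}\,L(x^*)=\emptyset$ — otherwise we could strictly decrease $u$ while staying feasible, using again that $u$ decreases along the $(-g)$-direction into the interior of the lower contour set. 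Separate $F$ from $\mathrm{int}\,L(x^*)$ by a hyperplane; after identifying the normal with $-g(x^*)$ (up to a positive scalar, using that $g(x^*)$ is the supporting functional of $L(x^*)$ at $x^*$), we get $g(x^*)\cdot(x-x^*)\ge 0$ on $F$, i.e.\ $x^*$ minimizes the linear functional $x\mapsto g(x^*)\cdot x$ over the convex set $F$. This is now an ordinary convex program (linear objective, convex constraints) satisfying Slater's condition, so the classical KKT theorem — via $\partial(\sum\lambda_i h_i)=\sum\lambda_i\partial h_i$ and the separating hyperplane theorem applied to the epigraph — yields $\lambda_i\ge 0$ with $-g(x^*)\in\sum_i\lambda_i\partial h_i(x^*)$ and $\lambda_i h_i(x^*)=0$.

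The main obstacle is establishing rigorously the normal-cone claim: that for a regular solution $u$ to (\ref{TDE}), $x^*$ on the boundary of $L(x^*)$, one has $g(x^*)\cdot(y-x^*)\ge 0$ for every $y\in L(x^*)$, with the correct strict version. The subtlety is that $u$ need not be differentiable at $x^*$, so one cannot simply invoke $\nabla u(x^*)\parallel g(x^*)$. Instead I would argue: take $y$ with $u(y)<u(x^*)$; consider the segment $\sigma(s)=(1-s)x^*+sy$ and the function $s\mapsto u(\sigma(s))$, which is continuous with $u(\sigma(0))=u(x^*)>u(\sigma(1))$; by quasi-convexity it is $\le u(x^*)$ throughout, and I claim it is actually strictly decreasing near $s=0$, because if it were locally constant or increasing then combining with the monotonicity of $u$ along $t\mapsto u(x^*+tg(x^*))$ (Lemma \ref{LEMM}(3), comparing the sign of $g(x^*)\cdot(y-x^*)$ against $g(x^*)\cdot g(x^*)>0$) forces $g(x^*)\cdot(y-x^*)\le 0$. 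Handling the borderline $u(y)=u(x^*)$ case then follows by approximating such $y$ by points with strictly smaller value (using that $u$ has no local minimum, part (5) of the normal-solution definition). Once this geometric lemma is in hand, both directions of the theorem are the standard convex-analytic manipulations sketched above, and the supporting hyperplane theorem quoted in Section 2 supplies exactly the separation needed.
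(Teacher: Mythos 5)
Your overall architecture is the paper's: separate the feasible set from the strict lower contour set $C_1=\{x\in U\mid u(x)<u(x^*)\}$, identify the separating/supporting normal with a negative multiple of $g(x^*)$ via the fact that $g(x^*)$ spans the normal line to the level set at $x^*$, and then invoke the classical convex KKT theorem for the linearized program $\min\ g(x^*)\cdot x$ over the feasible set; your sufficiency direction replaces the paper's supporting-hyperplane argument by the subgradient inequality plus a first-order quasi-convexity property of $u$ relative to $g$. But the key lemma you yourself flag as ``the main obstacle'' is where the gap lies. First, the sign: since $u$ is regular, $g(x^*)$ points in the direction of increase, so for $y\in L(x^*)$ the correct inequality is $g(x^*)\cdot(y-x^*)\le 0$, not $\ge 0$ as you state; your own sketch in the last paragraph in fact derives ``$\le 0$'', and the form you actually use in the ``if'' direction is the strict version of the $\le 0$ statement, so the claim as written is internally inconsistent. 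Second, and more substantively, the strict version --- $u(y)<u(x^*)\Rightarrow g(x^*)\cdot(y-x^*)<0$ --- is exactly what the ``if'' direction needs (the weak version combined with $g(x^*)\cdot(x-x^*)\ge 0$ on the feasible set yields only equality, hence no contradiction), and your sketch does not prove it: the Lemma \ref{LEMM}(3) argument only rules out $g(x^*)\cdot(y-x^*)>0$, and the ``borderline case'' you treat, $u(y)=u(x^*)$, is not the relevant one; what must be excluded is $u(y)<u(x^*)$ with $g(x^*)\cdot(y-x^*)=0$.

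Fortunately the missing step is one line: if $u(y)<u(x^*)$, then by continuity $u(y+\varepsilon g(x^*))<u(x^*)$ for small $\varepsilon>0$, and applying your weak inequality to this perturbed point gives $g(x^*)\cdot(y-x^*)\le-\varepsilon\|g(x^*)\|^2<0$. This openness trick is essentially how the paper obtains its strict separation ($p^*\cdot y>M$ for all $y$ in the open set $C_1$) in its own sufficiency proof, which otherwise runs through the supporting hyperplane theorem and the tangent-space computation $p^*\perp T_{x^*}(u^{-1}(u(x^*)))$. Relatedly, in your necessity direction the phrase ``identifying the normal with $-g(x^*)$ \dots\ using that $g(x^*)$ is the supporting functional of $L(x^*)$ at $x^*$'' presumes uniqueness of the supporting hyperplane, which is not automatic for a merely quasi-convex, possibly nondifferentiable $u$; it is exactly the paper's argument that any separating $p^*$ satisfies $p^*\cdot\dot c(0)=0$ for every $C^1$ curve $c$ in the level set through $x^*$, hence is orthogonal to the $(n-1)$-dimensional tangent space and therefore proportional to $g(x^*)$, with the sign pinned down by $x^*-tg(x^*)\in C_1$ (regularity). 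With those two points written out, your argument goes through and is essentially the paper's proof, with a slightly more direct subgradient computation in the sufficiency half.
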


\begin{proof}
Define
\[C_1=\{x\in U|u(x)<u(x^*)\},\]
\[C_2=\{x\in U|\forall i\in \{1,...,k\},\ h_i(x)\le 0\}.\]
The lower contour set of $u$ at $x^*$ is defined as follows:
\[L(x^*)=\{x\in U|u(x)\le u(x^*)\}.\]
Because $x^*\in L(x^*)$, $L(x^*)$ is nonempty. Choose any $x\in L(x^*)$. Because $u$ is regular, $x-tg(x)\in C_1$ for any sufficiently small $t>0$. Therefore, $L(x^*)$ is the closure of $C_1$ and $C_1$ is the interior of $L(x^*)$. In particular, $C_1$ is nonempty, and because $u$ is quasi-convex, $C_1$ and $L(x^*)$ are convex. Since $x^+\in C_2$ and each $h_i$ is convex, $C_2$ is also nonempty and convex.

First, suppose that $x^*$ is a solution to (\ref{MP}). Then, $C_1\cap C_2=\emptyset$. By the separating hyperplane theorem, there exist $p^*\in \mathbb{R}^n\setminus\{0\}$ and $M\in \mathbb{R}$ such that
\[p^*\cdot z\le M\le p^*\cdot y\]
for all $y\in C_1$ and $z\in C_2$.

Because $x^*\in C_2$, $p^*\cdot x^*\le M$. Because $x^*$ is a limit point of $C_1$, $p^*\cdot x^*\ge M$. In conclusion, $p^*\cdot x^*=M$, and thus $x^*$ is a solution to the following convex problem:
\begin{align}
\min~~~~~&~-p^*\cdot z\nonumber \\
\mbox{subject to }&~z\in U,\nonumber \\
&~h_1(z)\le 0,\label{CONV}\\
&~\vdots \nonumber \\
&~h_k(z)\le 0.\nonumber
\end{align}
By the usual KKT theorem for convex programming, there exists $\mu_1,...,\mu_k\ge 0$ such that
\[p^*\in \sum_{i=1}^k\mu_i\partial h_i(x^*),\]
and for all $i\in \{1,...,k\}$,
\[\mu_ih_i(x^*)=0.\]
Choose any $C^1$ curve $c(t)$ such that $c(0)=x^*$ and $u(c(t))\equiv u(x^*)$. Then, $\dot{c}(0)\cdot g(x^*)=0$. Because each $c(t)$ is a limit point of $C_1$, we have that $p^*\cdot c(t)\ge M$. Therefore, $p^*\cdot \dot{c}(0)=0$. This implies that for $X=u^{-1}(u(x^*))$, both $p^*$ and $g(x^*)$ are orthogonal to $T_{x^*}(X)$, and thus $p^*$ is proportional to $g(x^*)$. Hence, $p^*=ag(x^*)$ for some $a\in\mathbb{R}\setminus \{0\}$. Choose any $t>0$ such that $x^*-tg(x^*)\in C_1$. Then,
\[p^*\cdot x^*-at\|p^*\|^2=p^*\cdot (x^*-tg(x^*))\ge M=p^*\cdot x^*,\]
which implies that $a<0$. Let $\lambda_i=-a^{-1}\mu_i$. Then, for all $i\in \{1,...,k\}$,
\[\lambda_ih_i(x^*)=0,\]
and
\[-g(x^*)=-a^{-1}p^*\in \sum_{i=1}^k\lambda_i\partial h_i(x^*),\]
as desired.

Next, suppose that $h_i(x^*)\le 0$ for all $i\in \{1,...,k\}$, and there exist $\lambda_1,...,\lambda_k\ge 0$ such that
\[-g(x^*)\in\sum_{i=1}^k\lambda_i\partial h_i(x^*)\]
and for all $i\in \{1,...,k\}$,
\[\lambda_ih_i(x^*)=0.\]
Let $p^*=-g(x^*)$, and consider the problem (\ref{CONV}). Then, by the usual KKT theorem for convex programming, $x^*$ is a solution to (\ref{CONV}). Hence, if we define $M=p^*\cdot x^*$, then $p^*\cdot z\le M$ for all $z\in C_2$. Next, applying the supporting hyperplane theorem for $L(x^*)$, there exists $q^*\in \mathbb{R}^n\setminus \{0\}$ such that $q^*\cdot x\ge q^*\cdot x^*$ for all $x\in L(x^*)$. Let $c(t)$ be a $C^1$ curve such that $c(0)=x^*$ and $u(c(t))=u(x^*)$ for all $t$. Then, $q^*\cdot c(t)\ge q^*\cdot x^*$ for all $t$, and thus $q^*\cdot \dot{c}(0)=0$. Hence, if we define $X=u^{-1}(u(x^*))$, then both $p^*$ and $q^*$ are orthogonal to $T_{x^*}(X)$, and thus there exists $a\in \mathbb{R}$ such that $a\neq 0$ and $q^*=ap^*$. Because there exists $t>0$ such that $x^*+tp^*\in C_1$,
\[p^*\cdot (x^*+tp^*)=M+t\|p^*\|^2>M,\]
\[q^*\cdot (x^*+tp^*)\ge M.\]
Therefore, $a>0$ and for all $x\in L(x^*)$,
\[p^*\cdot x=aq^*\cdot x\ge aq^*\cdot x^*=p^*\cdot x^*=M.\]
Suppose that there exists $x\in C_1$ such that $p^*\cdot x=M$. Because $C_1$ is open, there exists $y\in C_1$ such that $p^*\cdot y<M$, which is a contradiction. To summarize our arguments, we obtain that
\[p^*\cdot z\le M<p^*\cdot y\]
for all $z\in C_2$ and $y\in C_1$, which implies that $x^*$ is a solution to (\ref{MP}). This completes the proof. $\blacksquare$
\end{proof}

The following theorem presents a method for judging whether a normal solution $u$ to (\ref{TDE}) is quasi-convex or not.

\begin{thm}\label{QC}
Suppose that $U\subset \mathbb{R}^n$ is open, $g:U\to \mathbb{R}^n\setminus \{0\}$ is locally Lipschitz and satisfies Jacobi's integrability condition. Then, the following three conditions are equivalent.
\begin{enumerate}[\upshape (i)]
\item for any $x^*\in U$, there exists an open neighborhood $V$ of $x^*$ such that for any $x,y\in V$,
\[g(x)\cdot y\ge g(x)\cdot x\Rightarrow g(y)\cdot x\le g(y)\cdot y.\]

\item For any $x\in U$ and $v\in \mathbb{R}^n$, if $g(x)\cdot v=0$, then
\[\limsup_{t\downarrow 0}\frac{1}{t}v\cdot g(x+tv)\ge 0.\]

\item Any regular solution $u$ to \textup{(\ref{TDE})} on a convex domain is quasi-convex.
\end{enumerate}
\end{thm}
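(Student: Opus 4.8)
The plan is to prove the cycle of implications (ii) $\Rightarrow$ (iii) $\Rightarrow$ (i) $\Rightarrow$ (ii); the first of these carries essentially all the difficulty, the other two being short. For (i) $\Rightarrow$ (ii), fix $x \in U$ and $v$ with $g(x) \cdot v = 0$, and let $V$ be the neighborhood of $x$ supplied by (i). For $t > 0$ small enough that $x + tv \in V$, the premise of (i) holds for the pair $(x, x + tv)$, since $g(x) \cdot (x + tv) - g(x) \cdot x = t\, g(x) \cdot v = 0 \ge 0$; hence $g(x + tv) \cdot (x + tv) - g(x + tv) \cdot x \ge 0$, i.e. $t\, v \cdot g(x + tv) \ge 0$. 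Thus $\tfrac1t v \cdot g(x + tv) \ge 0$ for all small $t > 0$, and (ii) follows.

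For (iii) $\Rightarrow$ (i), fix $x^{*} \in U$. Since $g$ satisfies Jacobi's integrability condition, Theorem \ref{MAIN} provides a normal solution around $x^{*}$; restricting it to an open ball $V$ centered at $x^{*}$ and, by the remark after Lemma \ref{LEMM}, replacing it with its negative if necessary, we obtain a regular solution $u$ on $V$, which by (iii) is quasi-convex. I claim (i) holds with this $V$. The main tool is the form of Lemma \ref{LEMM}(3) obtained by taking the auxiliary direction there to be $g(z)$ itself (legitimate since $g(z) \cdot g(z) > 0$ and $u$ is regular): for $z \in V$ and $w$ with $g(z) \cdot w \ne 0$, the map $s \mapsto u(z + sw)$ is increasing near $0$ if $g(z) \cdot w > 0$ and decreasing near $0$ if $g(z) \cdot w < 0$. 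Now take $x, y \in V$ with $g(x) \cdot (y - x) \ge 0$, set $w = y - x$, and let $\psi(s) = u(x + sw)$, a quasi-convex function on an open interval containing $[0,1]$, hence first non-increasing then non-decreasing. If $g(x) \cdot w > 0$, then $\psi$ is strictly increasing just to the right of $0$, so $\psi$ is non-decreasing on all of $[0,1]$, in particular near $1$; were $g(y) \cdot w < 0$, the tool above would force $\psi$ to be decreasing near $1$, a contradiction, so $g(y) \cdot w \ge 0$. If instead $g(x) \cdot w = 0$, apply the case just proved to $(x, y + \varepsilon g(x))$ for small $\varepsilon > 0$ (noting $g(x) \cdot (y + \varepsilon g(x) - x) = \varepsilon |g(x)|^{2} > 0$) and let $\varepsilon \downarrow 0$, using continuity of $g$.

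The remaining implication (ii) $\Rightarrow$ (iii) I would prove by contraposition. If some regular solution $u$ on a convex domain $D$ fails to be quasi-convex, then it fails to be quasi-convex along some segment, so there is a segment $s \mapsto a + sw$, $s \in [0,1]$, along which $\psi(s) = u(a + sw)$ attains an interior maximum $M$ strictly above $\psi(0)$ and $\psi(1)$; after passing to a sub-segment one may also assume the maximizer $s_{+}$ is a strict local maximum of $\psi$, the case of a larger maximizing set being handled by a limiting argument. Writing $\phi(s) = w \cdot g(a + sw)$ and using parts (3) and (4) of Lemma \ref{LEMM}, the sign of $\phi$ governs the monotonicity of $\psi$: $\psi$ is increasing on each component of $\{\phi > 0\}$ and decreasing on each component of $\{\phi < 0\}$, and $\phi(s_{+}) = 0$ (otherwise $\psi$ would exceed $M$ just to one side of $s_{+}$), so at $z_{+} = a + s_{+}w$ the hypothesis of (ii) is met for the direction $w$. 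The goal is then to exhibit a point $z$ and a direction $v$ tangent to the level set through $z$ with $\limsup_{t \downarrow 0} \tfrac1t v \cdot g(z + tv) < 0$, contradicting (ii). It is convenient first to reduce to $n = 2$ by restricting $u$ to the $2$-plane through $z_{+}$ spanned by $w$ and $g(z_{+})$, on which $u$ becomes a two-dimensional regular solution for the projected field; there the level set through $z_{+}$ is a $C^{1}$ curve, tangent to the segment at $z_{+}$ and lying locally on the $\{u \le M\}$ side of it, and I would exploit this one-sidedness through the local $C^{1}$ graph representation of the curve — so that $g$ along the curve is, up to a positive scalar, its upward normal — to extract the required strictly negative upper Dini derivative, perturbing $z$ slightly off $z_{+}$ along $w$ or along another tangent direction should $\phi$ be too degenerate at $z_{+}$ itself.

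The main obstacle is precisely this last extraction. Condition (ii) is only a one-sided $\limsup \ge 0$ statement, so it does not on its face exclude the borderline configurations in which every relevant one-sided directional derivative of $w \cdot g$ vanishes in the limit while $\psi$ nonetheless has an interior maximum; excluding these is where the $C^{1}$ — rather than merely Lipschitz — regularity of the level sets, available because $u$ is a normal solution, must enter in an essential way, and this is the delicate core of the proof.
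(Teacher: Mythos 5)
Your implications (i) $\Rightarrow$ (ii) and (iii) $\Rightarrow$ (i) are fine; the latter even avoids the paper's detour through Theorem \ref{KKT} by working directly with Lemma \ref{LEMM}(3) and the elementary structure of one-variable quasi-convex functions, which is a legitimate and somewhat more elementary route. The problem is (ii) $\Rightarrow$ (iii), which carries essentially all of the content of the theorem and which you do not actually prove: you set up the same geometric reduction as the paper (interior maximizer $z^*$ of $u$ along the segment, the $2$-plane spanned by $w$ and $g(z^*)$, the $C^1$ level curve through $z^*$ and coordinates adapted to it), but then you stop at the point where, by your own admission, ``the delicate core of the proof'' lies, namely how to turn the one-sided, non-strict condition (ii) into a contradiction. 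Moreover, the goal you set yourself there --- exhibiting a single point $z$ and tangent direction $v$ with $\limsup_{t\downarrow 0}\frac1t v\cdot g(z+tv)<0$ --- is exactly the step that need not be attainable in the borderline configurations you worry about, so the plan as stated is not merely incomplete but aimed at the wrong target.

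The paper's resolution is different in spirit: it never produces a strictly negative limsup at one point, but instead aggregates the non-strict inequality (ii) along the level curve. Writing $x(t)=y(w(t))+a(t)p^*$ with $p^*=g(z^*)$, $z(t)=y(w(t))$, $b(t)=\dot z(t)=v-\dot a(t)p^*$, one has $b(t)\cdot g(z(t))=0$, so (ii) applies at every $z(t)$ with direction $b(t)$; combining this with the local Lipschitz constant of $g$ (to control the error terms $(b(s)-b(t))\cdot(g(z(s))-g(z(t)))$ and $b(t)\cdot(g(z(s))-g(z^*(t,s)))$) yields the Dini-derivative inequalities $\liminf_{s\downarrow t}\frac{\dot a(t)-\dot a(s)}{s-t}\le 0$ and $\liminf_{s\uparrow t}\frac{\dot a(t)-\dot a(s)}{s-t}\le 0$ for all $t$ slightly to the right of $t^*$. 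A maximum-principle-type argument with the auxiliary function $h(t)=\dot a(t)(t_1-t^*)-\dot a(t_1)(t-t^*)$ then forces $\dot a(t_1)\ge 0$ for all such $t_1$, hence $a\ge 0$ near $t^*$; since also $a\le 0$ there, $a\equiv 0$, so $u(x(t))=u(z^*)$ just to the right of $t^*$, contradicting the choice of $t^*$ as the largest maximizer. This quantitative step --- the passage from pointwise (ii) along the curve to monotonicity of $a$ via Dini derivatives of $\dot a$ --- is the missing idea in your proposal, and without it the implication (ii) $\Rightarrow$ (iii), and hence the theorem, is not established.
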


\begin{proof}
Suppose that (i) holds, and choose any $x\in U$ and $v\in \mathbb{R}^n$ with $g(x)\cdot v=0$. Define $x(t)=x+tv$. Then, $g(x)\cdot x(t)=g(x)\cdot x$, and thus by (i), $g(x+tv)\cdot x\le g(x+tv)\cdot (x+tv)$ for any sufficiently small $t>0$. Therefore,
\begin{align*}
&~\limsup_{t\downarrow 0}\frac{1}{t}v\cdot g(x+tv)\\
=&~\limsup_{t\downarrow 0}\frac{1}{t^2}[(x+tv)-x]\cdot g(x+tv)\ge 0.
\end{align*}
Therefore, (ii) holds.

Next, suppose that (iii) holds. Choose $x^*\in U$, and a normal solution $u:V\to \mathbb{R}$ to (\ref{TDE}) around $x^*$. Without loss of generality, we can assume that $V$ is convex and $u$ is regular. Then, $u$ is quasi-convex. Choose $x\in V$, and consider the following minimization problem:
\begin{align*}
\min~~~~~&~u(y)\\
\mbox{subject to }&~y\in V,\\
&~g(x)\cdot y\ge g(x)\cdot x.
\end{align*}
Let
\[h_1(y)=g(x)\cdot (x-y).\]
Then, the condition $g(x)\cdot y\ge g(x)\cdot x$ is equivalent to $h_1(y)\le 0$, and $\partial h_1(x)=\{-g(x)\}$. By Theorem \ref{KKT}, $x$ is a solution to this problem, and thus $g(x)\cdot y\ge g(x)\cdot x$ implies that $u(y)\ge u(x)$. Suppose that $g(x)\cdot y\ge g(x)\cdot x$ but $g(y)\cdot x>g(y)\cdot y$. Then, $u(y)\ge u(x)$. Choose any $t>0$ such that $u(x-tg(x))<u(x)$ and $g(y)\cdot (x-tg(x))>g(y)\cdot y$. Then, $u(x)>u(x-tg(x))\ge u(y)$, which is a contradiction. Therefore, $g(y)\cdot x\le g(y)\cdot y$, and thus (i) holds.

It suffices to show that (ii) implies (iii). Suppose that (ii) holds but there exist a regular solution $u:V\to \mathbb{R}$ to (\ref{TDE}), $x,y\in V$, and $t\in [0,1]$ such that $V$ is convex and
\[u(y)\le u(x)<u((1-t)x+ty).\]
Let $x(t)=(1-t)x+ty$, $t^*=\max\arg\max\{u(x(t))|t\in [0,1]\}$ and $z^*=x(t^*)$. Then, $0<t^*<1$, $u(x(t))\le u(z^*)$ if $t\in [0,1]$, and $u(x(t))<u(z^*)$ if $t^*<t\le 1$. Define $p^*=g(z^*)$, $v=y-x$, and $V^*=\mbox{span}\{p^*,v\}$. If $v$ is proportional to $p^*$, then by Lemma \ref{LEMM}, $u(x(t))$ is monotone around $z^*$, which contradicts our assumption. Therefore, $V^*$ is $2$-dimensional.

Choose an open neighborhood $Z$ of $z^*$ such that $z\in Z$ implies $g(z)\cdot p^*>0$. Let $X=u^{-1}(u(z^*))\cap Z$ and $Y=X\cap (z^*+V^*)$. If $z\in X$, then $p^*\notin T_z(X)$, and thus $T_z(X)+V^*=\mathbb{R}^n$. Therefore, $X$ is transversal to $z^*+V^*$, and thus $Y$ is a $1$-dimensional $C^1$ manifold and there exists a $C^1$ local parametrization $y(s)$ of $Y$ such that $y(0)=z^*$. Let $\Phi(s,a)=y(s)+ap^*$ and $\Psi(b,c)=z^*+bv+cp^*$. Then, $\Psi$ is a $C^{\infty}$ diffeomorphism between $\mathbb{R}^2$ and $z^*+V^*$, and $D(\Psi^{-1}\circ \Phi)(0,0)$ is regular. By the inverse function theorem, there exist two open neighborhoods $U_1,V_1$ of $(0,0)$ such that $\Psi^{-1}\circ \Phi:U_1\to V_1$ is a $C^1$ diffeomorphism. Hence, $\Phi=\Psi\circ (\Psi^{-1}\circ \Phi)$ is a $C^1$ bijection on $U_1$ such that $\Phi^{-1}=(\Psi^{-1}\circ \Phi)^{-1}\circ \Psi^{-1}$ is also $C^1$ on $W_1\equiv \Phi(U_1)$. Because $g$ is locally Lipschitz, we can assume that $g$ is $L$-Lipschitz on $W_1$. Without loss of generality, we can assume that $U_1$ is convex. By Lemma \ref{LEMM}, $u(z+\tau p^*)$ is increasing in $\tau$ for any $z\in W_1$. 

Let $(w(t),a(t))=\Phi^{-1}(x(t))$. Because $W_1,U_1$ are open and $(w(t^*),a(t^*))=\Phi^{-1}(z^*)=(0,0)\in U_1$, there exists $\delta>0$ such that if $|t-t^*|<\delta$, then $(w(t),0), (w(t),a(t))\in U_1$. Let
\[z(t)=\Phi(w(t),0)=x(t)-a(t)p^*,\ b(t)=\dot{z}(t)=v-\dot{a}(t)p^*.\]
By definition, $x(t)=y(w(t))+a(t)p^*$, and thus $z(t)=y(w(t))$. Because $u(y(w(t)))=u(z^*)$, $u(x(t))\le u(z^*)$, and $u(y(s)+\tau p^*)$ is increasing in $\tau$, $|t-t^*|<\delta$ implies that $a(t)\le 0$. Because $a(t^*)=0$, $t^*$ is a local maximum point of $a(t)$, and thus $\dot{a}(t^*)=0$. Moreover, by definition of $y(s)$, $\dot{y}(w(t))\cdot g(z(t))=0$, and thus $b(t)\cdot g(z(t))=0$.

Choose any $t\in [0,1]$ with $0<t-t^*<\delta$, and define $z^*(t,s)=z(t)+b(t)(s-t)$. Then,
\begin{align*}
&~\left|\frac{(b(s)-b(t))\cdot (g(z(s))-g(z(t)))}{s-t}\right|\\
\le&~|\dot{a}(s)-\dot{a}(t)|\|p^*\|L\frac{\|z(s)-z(t)\|}{|s-t|}\to 0\mbox{ as }s\to t,
\end{align*}
and
\begin{align*}
&~\left|\frac{b(t)\cdot (g(z(s))-g(z^*(t,s)))}{s-t}\right|\\
\le&~L\|b(t)\|\|p^*\|\frac{|a(s)-a(t)-\dot{a}(t)(s-t)|}{|s-t|}\to 0\mbox{ as }s\to t.
\end{align*}
Therefore,
\begin{align*}
&~(p^*\cdot g(z(t)))\liminf_{s\downarrow t}\frac{\dot{a}(t)-\dot{a}(s)}{s-t}\\
=&~\liminf_{s\downarrow t}\frac{(b(s)-b(t))\cdot g(z(t))}{s-t}=\liminf_{s\downarrow t}\frac{b(s)\cdot g(z(t))}{s-t}\\
=&~-\limsup_{s\downarrow t}\frac{b(s)\cdot (g(z(s))-g(z(t)))}{s-t}\\
=&~-\limsup_{s\downarrow t}\frac{1}{s-t}[(b(s)-b(t))\cdot (g(z(s))-g(z(t)))\\
&~+b(t)\cdot (g(z^*(t,s))-g(z(t)))+b(t)\cdot (g(z(s))-g(z^*(t,s)))]\le 0,
\end{align*}
where the last inequality follows from (ii). Therefore,
\[\liminf_{s\downarrow t}\frac{\dot{a}(t)-\dot{a}(s)}{s-t}\le 0.\]
By the symmetrical arguments, we can show that
\[\liminf_{s\uparrow t}\frac{\dot{a}(t)-\dot{a}(s)}{s-t}\le 0.\]
Now, choose $t_1\in [0,1]$ such that $0<t_1-t^*<\delta$, and define $h(t)=\dot{a}(t)(t_1-t^*)-\dot{a}(t_1)(t-t^*)$. Then, $h(t^*)=h(t_1)=0$, and thus, there exists $t^+\in (t^*,t_1)$ such that $h(t^+)$ attains either the maximum or the minimum on $[t^*,t_1]$. If $h(t^+)$ attains the maximum, then
\[0\le \liminf_{t\downarrow t^+}\frac{h(t^+)-h(t)}{t-t^+}=(t_1-t^*)\liminf_{t\downarrow t^+}\frac{\dot{a}(t^+)-\dot{a}(t)}{t-t^+}+\dot{a}(t_1)\le \dot{a}(t_1),\]
which implies that $\dot{a}(t_1)\ge 0$. If $h(t^+)$ attains the maximum, then by the symmetrical argument, we can also show that $\dot{a}(t_1)\ge 0$. Therefore, $\dot{a}(t)\ge 0$ if $1\ge t>t^*$ and $|t-t^*|<\delta$. This implies that if $t\in [0,1]$ and $|t-t^*|<\delta$, then $a(t)\ge a(t^*)=0$, and thus, $a(t)\equiv 0$. Hence, $u(x(t))=u(z^*)$ for all $t>t^*$ such that $t-t^*$ is sufficiently small, which is a contradiction. Thus, we have that $u$ is quasi-convex, as desired. This completes the proof. $\blacksquare$
\end{proof}

Sometimes, we need a stronger assumption than quasi-convexity. Suppose that $V\subset \mathbb{R}^n$ is convex. A function $u:V\to \mathbb{R}$ is said to be {\bf strictly quasi-convex} if, for any $x,y\in V$ and $t\in (0,1)$, if $x\neq y$, then
\[u((1-t)x+ty)<\max\{u(x),u(y)\}.\]
It is well-known that, if $u$ is strictly quasi-convex, then a solution to (\ref{MP}) is unique. Therefore, this assumption is useful.

The following proposition presents sufficient conditions for the strict quasi-convexity.

\begin{prop}\label{QC2}
Suppose that $U\subset \mathbb{R}^n$ is open, and $g:U\to \mathbb{R}^n\setminus \{0\}$ is locally Lipschitz and satisfies Jacobi's integrability condition. Consider the following three conditions.
\begin{enumerate}[\upshape (I)]
\item for any $x^*\in U$, there exists an open neighborhood $V$ of $x^*$ such that for any $x,y\in V$ with $x\neq y$,
\[g(x)\cdot y\ge g(x)\cdot x\Rightarrow g(y)\cdot x<g(y)\cdot y.\]

\item For any $x\in U$ and $v\in \mathbb{R}^n$, if $g(x)\cdot v=0$ and $v\neq 0$, then
\[\limsup_{t\downarrow 0}\frac{1}{t}v\cdot g(x+tv)>0.\]

\item Any regular solution $u$ to \textup{(\ref{TDE})} defined on a convex domain is strictly quasi-convex.
\end{enumerate}
Then, \textup{(I)} is equivalent to \textup{(III)}, and \textup{(II)} implies both \textup{(I)} and \textup{(III)}.
\end{prop}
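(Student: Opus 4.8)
The plan is to leverage Theorems~\ref{QC} and~\ref{KKT}, whose weaker (non-strict) versions already supply quasi-convexity. Note first that condition (I) implies condition (i) of Theorem~\ref{QC} (for $x=y$ the conclusion there is an equality, and for $x\ne y$ it is (I) with the strict inequality weakened), so (I) forces every regular solution on a convex domain to be quasi-convex; likewise (II) implies condition (ii) of Theorem~\ref{QC}, with the same consequence. Thus in each implication below the relevant regular solution $u$ may be assumed quasi-convex from the outset, and the task reduces to upgrading ``quasi-convex'' to ``strictly quasi-convex'' — or, for (III)$\Rightarrow$(I), to reading the strict separation property off strict quasi-convexity.

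For (III)$\Rightarrow$(I): given $x^*\in U$, Theorem~\ref{MAIN} yields a normal solution around $x^*$; after shrinking its domain to a convex neighbourhood $V$ of $x^*$ and replacing $u$ by $-u$ if necessary (one of $u,-u$ is regular on the connected domain, by the discussion following Lemma~\ref{LEMM}), we obtain a regular $u\colon V\to\mathbb{R}$, strictly quasi-convex by (III). For any $x\in V$, Theorem~\ref{KKT} applied to $\min u$ over $\{w\in V: h_1(w)\le 0\}$ with $h_1(w)=g(x)\cdot(x-w)$ (Slater holds via $w=x+\varepsilon g(x)$, and $-g(x)\in\partial h_1(x)=\{-g(x)\}$ with multiplier $1$, the complementary slackness being automatic since $h_1(x)=0$) shows that $x$ solves this program, i.e.\ $g(x)\cdot w\ge g(x)\cdot x\Rightarrow u(w)\ge u(x)$. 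Now suppose $x\ne y$, $g(x)\cdot y\ge g(x)\cdot x$, but $g(y)\cdot x\ge g(y)\cdot y$. Applying the above at $x$ gives $u(y)\ge u(x)$, and applying it at $y$ (with the constraint $g(y)\cdot w\ge g(y)\cdot y$, which $x$ satisfies) gives $u(x)\ge u(y)$, so $u(x)=u(y)$; but then $m=\tfrac12(x+y)\in V$ satisfies $g(x)\cdot m\ge g(x)\cdot x$, whence $u(m)\ge u(x)=\max\{u(x),u(y)\}$, contradicting strict quasi-convexity. Hence $g(y)\cdot x<g(y)\cdot y$, and (I) holds on $V$.

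For (I)$\Rightarrow$(III) (and, with the same structure, (II)$\Rightarrow$(III)): let $u\colon V\to\mathbb{R}$ be regular on a convex $V$, hence quasi-convex, and suppose strict quasi-convexity fails, so there are $x\ne y$ and $t_0\in(0,1)$ with $u((1-t_0)x+t_0y)=\max\{u(x),u(y)\}=:M$; set $x(s)=(1-s)x+sy$ and assume $u(x)=M\ge u(y)$. If $u(x)>u(y)$, put $t^*=\max\arg\max_{s\in[0,1]}u(x(s))$; then $0<t^*<1$ and $u(x(s))<u(x(t^*))=M$ for $s\in(t^*,1]$, so the proof of (ii)$\Rightarrow$(iii) of Theorem~\ref{QC} applies without essential change (condition (ii) is available since (I)$\Rightarrow$(ii) and (II)$\Rightarrow$(ii)) and yields a contradiction. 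If instead $u(x)=u(y)=M$, then quasi-convexity makes $\{s\in[0,1]:u(x(s))<M\}$ a subinterval (each $\{s:u(x(s))\le c\}$ is convex, and the strict sublevel set is their increasing union), say $(a,b)$ with $0\le a<b\le 1$; since $u(x(0))=u(x(1))=u(x(t_0))=M$ with $0<t_0<1$, either $t_0\le a$, forcing $a>0$ and $u\equiv M$ on the nondegenerate segment $[x,x(a)]$, or $t_0\ge b$, forcing $b<1$ and $u\equiv M$ on $[x(b),y]$. In either case a nondegenerate segment $[p,q]$ lies inside a single level set $X=u^{-1}(M)$, which is an $(n-1)$-dimensional $C^1$ manifold with $g$ as normal field, so $g(w)\cdot(q-p)=0$ for every $w\in[p,q]$. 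Then (I), applied at the midpoint of $[p,q]$ to two distinct points $p',q'$ of $[p,q]$ lying in the neighbourhood it provides, gives $g(p')\cdot q'=g(p')\cdot p'$ (hence $\ge$), so $g(q')\cdot p'<g(q')\cdot q'$, contradicting $g(q')\cdot(q'-p')=0$; alternatively (II), applied at $p$ with $v=q-p\ne 0$, gives $g(p)\cdot v=0$ while $\tfrac1s v\cdot g(p+sv)\equiv 0$ along $[p,q]$, so the $\limsup$ in (II) is $0$, a contradiction. This establishes (I)$\Rightarrow$(III) and (II)$\Rightarrow$(III); together with (III)$\Rightarrow$(I) it gives (I)$\Leftrightarrow$(III) and (II)$\Rightarrow$(I).

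The main obstacle is the sub-case $u(x)=u(y)=M$ with $u$ nonconstant on $[x,y]$: strict quasi-convexity is not violated along $[x,y]$ itself, so Theorem~\ref{QC} cannot be invoked directly, and the key point is that quasi-convexity confines the dip $\{u<M\}$ along the segment to a single interval, which isolates a nondegenerate flat sub-segment inside a level set — and conditions (I), (II) are precisely what forbid $u$ from being constant along any nondegenerate segment ((I) because it then triggers with an equality on one side while its conclusion is a strict inequality the other side must break). A secondary point needing care is re-checking, in the sub-case $u(x)>u(y)$, that the weaker hypothesis $u(x(t_0))=u(x)$ (rather than the strict inequality assumed in Theorem~\ref{QC}) still gives $0<t^*<1$ and strict decrease of $u(x(\cdot))$ past $t^*$, so that the delicate $\liminf$/$\limsup$ estimate there carries over unchanged.
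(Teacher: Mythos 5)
Your proposal is correct, but it reaches (I)$\Rightarrow$(III) and the implications from (II) by a route that differs from the paper's in a substantive way. For (III)$\Rightarrow$(I) you do essentially what the paper does: existence of a regular solution on a convex neighbourhood (Theorem~\ref{MAIN} plus the remark after Lemma~\ref{LEMM}), the half-space program $h_1(w)=g(x)\cdot(x-w)$ handled by Theorem~\ref{KKT}, and a midpoint contradiction with strictness. The divergence is in upgrading quasi-convexity to strict quasi-convexity. The paper never re-enters the analytic machinery of Theorem~\ref{QC}: assuming (I), it first strengthens the half-space property to ``$g(x)\cdot y\ge g(x)\cdot x$ and $y\ne x$ imply $u(y)>u(x)$'' by a short segment argument (perturbing $y$ along $-g(y)$ and applying (I) at interior points of $[x,y]$), after which failure of strict quasi-convexity is refuted purely algebraically; and it proves (II)$\Rightarrow$(I) directly via the flat-segment/tangency argument, obtaining (II)$\Rightarrow$(III) as a corollary. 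You instead attack failure of strictness head-on, which forces your Case~A ($u(x)>u(y)$ with the maximum attained at an interior point): since plain quasi-convexity is not violated there, you cannot cite statement (iii) of Theorem~\ref{QC} and must re-run its (ii)$\Rightarrow$(iii) proof under the weakened hypothesis $u(x(t_0))=u(x)$. Your check that this re-run only needs $0<t^*<1$, $u(x(t))\le u(z^*)$ on $[0,1]$ and $u(x(t))<u(z^*)$ on $(t^*,1]$ is accurate, so the argument is sound, but it is a heavier dependence on the internals of Theorem~\ref{QC} than the paper incurs; the paper's strengthened half-space property is the cheaper device and also makes Case~A unnecessary. Your Case~B (flat endpoints) reproduces, in essence, the paper's (II)$\Rightarrow$(I) tangency argument, and your local two-point application of (I) on the flat subsegment is a valid substitute; you derive (II)$\Rightarrow$(I) indirectly through (III), which is fine. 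Two small points to tidy: in Case~B the dip set $\{s:u(x(s))<M\}$ may be empty, in which case the whole segment is flat and the same contradiction applies (say so explicitly rather than writing it as $(a,b)$ with $a<b$); and at an endpoint of the flat segment the identity $g(p)\cdot(q-p)=0$ should be justified by continuity of $g$ from the relative interior, or avoided by working with interior points only, as you already do for (I).
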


\begin{proof}
First, suppose that $u:V\to \mathbb{R}$ is a regular quasi-concave solution to (\ref{TDE}). Choose $x\in V$, and consider the following minimization problem:
\begin{align*}
\min~~~~~&~u(y)\\
\mbox{subject to }&~y\in V,\\
&~g(x)\cdot y\ge g(x)\cdot x.
\end{align*}
By the same arguments as in the proof of Theorem \ref{QC}, we have that $x$ is a solution to this minimization problem. In particular, if $g(x)\cdot y\ge g(x)\cdot x$, then $u(y)\ge u(x)$. Suppose, in addition, that $u$ is strictly quasi-convex. If there is another solution $y\in V$ to the above problem, then for $z=(1-2^{-1})x+2^{-1}y$, $g(x)\cdot z\ge g(x)\cdot x$ and $u(z)<u(x)$, which is a contradiction. Therefore, $x$ is the unique solution to the above problem, and thus if $g(x)\cdot y\ge g(x)\cdot x$ and $y\neq x$, then $u(x)<u(y)$.

Now, suppose that (III) holds. Choose any $x^*\in U$, and a regular solution $u:V\to \mathbb{R}$ to (\ref{TDE}) such that $V$ is convex. Then, $u$ is strictly quasi-convex. If $y\neq x$ and $g(x)\cdot y\ge g(x)\cdot x$, then by the above arguments, we have $u(x)<u(y)$, and thus $g(y)\cdot x<g(y)\cdot y$. Hence, (I) holds.

Next, suppose that (I) holds. Choose a regular solution $u:V\to \mathbb{R}$ to (\ref{TDE}), where $V$ is convex. In this case, (i) of Theorem \ref{QC} holds, and thus $u$ is quasi-convex. Hence, $g(x)\cdot y\ge g(x)\cdot x$ implies that $u(y)\ge u(x)$. Suppose that $g(x)\cdot y\ge g(x)\cdot x$, $y\neq x$, and $u(y)=u(x)$. If $g(x)\cdot y>g(x)\cdot x$, then there exists $s>0$ such that for $w=y-sg(y)$, $g(x)\cdot w>g(x)\cdot x$. Then, $u(y)>u(w)\ge u(x)=u(y)$, which is a contradiction. Therefore, $g(x)\cdot y=g(x)\cdot x$. Choose any $t\in (0,1)$ and let $z=(1-t)x+ty$. Then, $g(x)\cdot z=g(x)\cdot x$, and thus $u(z)\ge u(x)$. Because $u$ is quasi-convex, $u(z)=u(x)=u(y)$. By (I), $g(z)\cdot x<g(z)\cdot z$. Therefore, $g(z)\cdot y>g(z)\cdot z$, and thus $u(y)>u(z)$, which is a contradiction. Hence, if $g(x)\cdot y\ge g(x)\cdot x$ and $y\neq x$, then $u(y)>u(x)$.

Now, suppose that $u$ is not strictly quasi-convex. Then, there exist $x,y\in V$ and $t\in (0,1)$ such that $x\neq y$ and
\[u((1-t)x+ty)\ge\max\{u(x),u(y)\}.\]
Let $z=(1-t)x+ty$. Because $u(z)\ge u(x)$ and $u(z)\ge u(y)$, we have that $g(z)\cdot x\le g(z)\cdot z$ and $g(z)\cdot y\le g(z)\cdot z$. Because
\[g(z)\cdot z=(1-t)(g(z)\cdot x)+t(g(z)\cdot y),\]
we have that $g(z)\cdot x=g(z)\cdot y=g(z)\cdot z$. Hence, $u(z)>u(x)$ and $u(z)>u(y)$, which contradicts the quasi-convexity of $u$. Therefore, $u$ must be strictly quasi-convex, and (III) holds.

It suffices to show that (II) implies (I). Suppose that (II) holds, and choose $x^*\in U$ and a regular solution $u:V\to \mathbb{R}$ to (\ref{TDE}) around $x^*$, where $V$ is convex. Because (II) implies (ii) of Theorem (\ref{QC}), $u$ is quasi-convex, and (i) of Theorem \ref{QC} holds. Hence, $g(x)\cdot y\ge g(x)\cdot x$ implies that $u(y)\ge u(x)$.

Suppose that there exist $x,y\in V$ such that $x\neq y$, $g(x)\cdot y\ge g(x)\cdot x$, and $g(y)\cdot x\ge g(y)\cdot y$. By the contrapositive of (i), we have that $g(x)\cdot y=g(x)\cdot x$ and $g(y)\cdot x=g(y)\cdot y$. Let $v=y-x$ and $x(t)=x+tv$. Choose any $t\in [0,1]$. Because $g(x)\cdot x(t)=g(x)\cdot x$, $u(x(t))\ge u(x)$. In particular, $u(y)\ge u(x)$. By the same arguments, we have that $u(x(t))\ge u(y)$, and in particular, $u(x)\ge u(y)$. Therefore, $u(x)=u(y)$. Because $u$ is quasi-convex, $u(x(t))=u(x)=u(y)$. Therefore, if $X=u^{-1}(u(x))$, then $v\in T_{x(t)}(X)$, and thus $g(x(t))\cdot v=0$ for all $t\in [0,1]$. This implies that
\[\limsup_{t\downarrow 0}\frac{1}{t}v\cdot g(x(t))=0,\]
which contradicts (II). Therefore, such $x,y$ are absent, and (I) holds. This completes the proof. $\blacksquare$
\end{proof}

In this proposition, (I) does not imply (II). We provide a counterexample in the appendix.

\appendix
\section{Examples}
\subsection{The Example in the Introduction}
In this subsection, we discuss an example presented by Debreu \cite{D76}.

Recall the definition of $g$:
\[g(x_1,x_2)=\begin{cases}
\left(\frac{x_2^2}{\sqrt{1+x_2^4}},\frac{1}{\sqrt{1+x_2^4}}\right) & \mbox{if } x_2\ge 0,\\
(0,1) & \mbox{if }x_2<0.
\end{cases}\]
A normal solution $u$ to (\ref{TDE}) around $(0,0)$ is as follows:
\[u(x_1,x_2)=\begin{cases}
\frac{x_2}{1-x_1x_2} & \mbox{if }x_2\ge 0,\\
x_2 & \mbox{if } x_2<0.
\end{cases}\]
This function $u$ is not $C^2$, but only $C^1$. Suppose that there exists a normal solution $v$ to (\ref{TDE}) around $(0,0)$ such that $v$ is $C^2$. Without loss of generality, we can assume that the domain $V$ of $v$ is $(-\varepsilon,\varepsilon)^2$, where $\varepsilon>0$ is sufficiently small, and $v$ is regular. Therefore, we can assume that $u$ is defined on $V$, and by Corollary \ref{COR2}, there exists an increasing $C^1$ function $\phi$ such that $v=\phi\circ u$. Note that, because $u(0,x_2)=x_2$, $\phi(c)=v(0,c)$. By Proposition \ref{prop1}, there exists a $C^1$ positive function $\mu:V\to \mathbb{R}$ such that $\nabla v=\mu g$ on $V$. Therefore, we have that
\[\phi'(c)=\frac{\partial v}{\partial x_2}(0,c)=\mu(0,c)g_2(0,c)>0\]
for all $c\in (-\varepsilon,\varepsilon)$. By the chain-rule,
\[\frac{\frac{\partial v}{\partial x_2}(x)}{\phi'(u(x))}=\frac{\partial u}{\partial x_2}(x).\]
However, if $x_2=0$ and $x_1\neq 0$, then the right-hand side is not differentiable in $x_2$ at $x$, whereas the left-hand side is differentiable. This is a contradiction. Therefore, such a $v$ is absent.

\subsection{An Example of Non-Convex Solutions}
Recall the definition of convexity and quasi-convexity. Suppose that $U\subset \mathbb{R}^n$ is convex, and consider a function $f:U\to \mathbb{R}$. Then, $f$ is said to be {\bf convex} if, for any $x,y\in U$ and $t\in [0,1]$,
\[f((1-t)x+ty)\le (1-t)f(x)+tf(y).\]
Next, $f$ is said to be {\bf quasi-convex} if, for any $x,y\in U$ and $t\in [0,1]$,
\[f((1-t)x+ty)\le \max\{f(x),f(y)\}.\]
Because
\[(1-t)f(x)+tf(y)\le \max\{f(x),f(y)\},\]
we have that any convex function is quasi-convex. Suppose that $\phi:f(U)\to \mathbb{R}$ is increasing, and define $h=\phi\circ f$. If $f$ is quasi-convex, then
\[h((1-t)x+ty)=\phi(f((1-t)x+ty))\le \phi(\max\{f(x),f(y)\})=\max\{h(x),h(y)\},\]
which implies that $h$ is also quasi-convex. However, $h$ might be non-convex even if $f$ is convex: consider $f(x)=x^3$ on $\mathbb{R}_+$ and $\phi(x)=x^{1/4}$.

If there is a convex normal solution to (\ref{TDE}) defined on some convex set $U$, then it is quasi-convex. If $U=\mathbb{R}^n_{++}$ and $g(x)\gg 0$ for any $x\in U$, then by Corollary \ref{COR2}, {\bf any normal solution} to (\ref{TDE}) defined on $U$ is either quasi-convex or quasi-concave. The problem considered in this subsection is the converse of this problem. Suppose that any normal solution to (\ref{TDE}) on $U=\mathbb{R}^n_{++}$ is either quasi-convex or quasi-concave. Is there a convex normal solution? The answer is negative. The following counterexample was obtained by Arrow and Enthoven \cite{AE}.

Choose $U=\mathbb{R}^2_{++}$. We consider the following function:
\[g_1(x_1,x_2)=1+\frac{x_1+1}{\sqrt{(x_1+1)^2+4x_2}},\]
\[g_2(x_1,x_2)=\frac{2}{\sqrt{(x_1+1)^2+4x_2}}.\]
A normal solution to (\ref{TDE}) is as follows.
\[u(x)=(x_1-1)+\sqrt{(x_1+1)^2+4x_2}.\]
This function is $C^1$ and strictly increasing. It can be easily checked that the range of $u$ is $\mathbb{R}_{++}$. For any $C>0$, if $x_1,x_2>0$ and 
\[(C-(x_1-1))^2=(x_1+1)^2+4x_2,\]
Then
\[0<4x_2=(C-2x_1)(C+2),\]
and thus, $C>2x_1>x_1-1$. Therefore,
\begin{align*}
u(x)=C\Leftrightarrow&~C-(x_1-1)=\sqrt{(x_1+1)^2+4x_2}\\
\Leftrightarrow&~(C-(x_1-1))^2=(x_1+1)^2+4x_2\\
\Leftrightarrow&~4x_2=C^2-2C(x_1-1)-4x_1\\
\Leftrightarrow&~x_2=(-1-C/2)x_1+(C^2+2C)/4.
\end{align*}
Hence, any level set of $u$ is a straight line. Suppose that $u(x)\ge u(y)$ and $u((1-t)x+ty)>u(x)$ for some $t\in [0,1]$. Then, $0<t<1$. By the intermediate value theorem, there exists $t^*\in (t,1]$ such that $u((1-t^*)x+t^*y)=u(x)$. Because the level set $u^{-1}(u(x))$ is a straight line, we have that $u((1-t)x+ty)=u(x)$, which is a contradiction. Therefore, we have that $u$ is quasi-convex. By Corollary \ref{COR2}, any normal solution to (\ref{TDE}) is either quasi-convex or quasi-concave.

Next, define
\[\psi_1(C)=\frac{C}{2},\ \psi_2(C)=\frac{C^2+2C}{4}.\]
Then,
\begin{align*}
\frac{x_1}{\psi_1(C)}+\frac{x_2}{\psi_2(C)}=1\Leftrightarrow&~\frac{2Cx_1+4x_1+4x_2}{C^2+2C}=1\\
\Leftrightarrow&~2Cx_1+4x_1+4x_2=C^2+2C\\
\Leftrightarrow&~x_2=(-1-C/2)x_1+(C^2+2C)/4\Leftrightarrow u(x)=C.
\end{align*}
Choose any $k>0$ and define
\[h_k(C)=\sup\{x_1+kx_2|u(x)\le C\},\]
\[h_k^*(C)=\sup\{-x_1-kx_2|u(x)\ge C\}.\]
Using the intermediate value theorem appropriately, we have that
\begin{align*}
h_k(C)=&~\sup\{x_1+kx_2|u(x)\le C\}=\sup\{x_1+kx_2|u(x)=C\}\\
=&~\sup\left\{x_1+kx_2\left|\frac{x_1}{\psi_1(C)}+\frac{x_2}{\psi_2(C)}=1\right.\right\}\\
=&~\max\{\psi_1(C),k\psi_2(C)\},
\end{align*}
and
\begin{align*}
h_k^*(C)=&~\sup\{-x_1-kx_2|u(x)\ge C\}=\sup\{-x_1-kx_2|u(x)=C\}\\
=&~-\inf\left\{x_1+kx_2\left|\frac{x_1}{\psi_1(C)}+\frac{x_2}{\psi_2(C)}=1\right.\right\}\\
=&~-\min\{\psi_1(C),k\psi_2(C)\}.
\end{align*}

Now, suppose that there exists a convex normal solution $w:U\to \mathbb{R}$ to (\ref{TDE}). By Corollary \ref{COR2}, it is either strictly increasing or strictly decreasing, and there is a monotone function $\varphi:I\to \mathbb{R}$ such that $I=w(U)$ and $u=\varphi\circ w$. Because $w$ is convex and the domain $U$ is open, it is locally Lipschitz, and thus $I$ is an interval.\footnote{Any convex function defined on an open and convex set is locally Lipschitz. See, for example, Rockafellar \cite{R}.} Clearly, $\varphi(z)>0$ for all $z\in I$.

We divide the proof into two cases.

\vspace{12pt}
\noindent
{\bf Case 1}. Suppose that $\varphi$ is increasing. Then, $w$ is convex and strictly increasing. Choose $z_0,z_1\in I$ and $t\in [0,1]$, and define $z_t=(1-t)z_0+tz_1$. Choose any $\varepsilon>0$. Let $g_k=h_k\circ \varphi$. For $i\in \{0,1\}$, there exists $x^i\in \mathbb{R}^2_{++}$ such that
\[g_k(z_i)-\varepsilon<x^i_1+kx^i_2,\ u(x^i)\le \varphi(z_i).\]
Hence, $w(x^i)\le z_i$, and thus
\[w((1-t)x^0+tx^1)\le (1-t)w(x^0)+tw(x^1)\le z_t,\]
which implies that $u((1-t)x^0+tx^1)\le \varphi(z_t)$. Therefore,
\begin{align*}
g_k(z_t)\ge&~[(1-t)x^0_1+tx^1_1]+k[(1-t)x^0_2+tx^1_2]\\
=&~(1-t)(x^0_1+kx^0_2)+t(x^1_1+kx^1_2)\\
>&~(1-t)g_k(z_0)+tg_k(z_1)-\varepsilon.
\end{align*}
Since $\varepsilon>0$ is arbitrary,
\[g_k(z_t)\ge (1-t)g_k(z_0)+tg_k(z_1).\]
Hence, $g_k$ is concave. Letting $k\to 0$, we have that $\psi_1\circ \varphi$ is concave. Because $\psi_1$ is increasing and linear, $\varphi$ itself is concave.

Therefore, $\varphi$ is locally Lipschitz. By Rademacher's theorem, there exists $z\in I$ such that $\varphi$ is differentiable at $z$. If $\varphi'(z)=0$, then $z$ is a maximum point of $\varphi$, which contradicts the monotonicity assumption of $\varphi$. Therefore, $\varphi'(z)>0$. Define $k=\frac{2}{\varphi(z)+2}$. Then, $\psi_1(\varphi(z))=k\psi_2(\varphi(z))$. By a simple calculation,
\begin{align*}
\left.\frac{d}{dz'}(\psi_1(\varphi(z'))-k\psi_2(\varphi(z')))\right|_{z'=z}=&~\frac{\varphi'(z)}{2}-k\psi_2'(\varphi(z))\varphi'(z)\\
=&~\varphi'(z)\left(\frac{1}{2}-\frac{\varphi(z)+1}{\varphi(z)+2}\right)<0.
\end{align*}
Hence, there exists a neighborhood $I'$ of $z$ such that if $z'\in I'$ and $z'>z$, then $g_k(z')=k\psi_2(\varphi(z'))$, and if $z'\in I'$ and $z>z'$, then $g_k(z')=\psi_1(\varphi(z'))$. Because $g_k$ is concave, we have that
\[k(\psi_2\circ \varphi)'(z)=D_+g_k(z)\le D_-g_k(z)=\frac{\varphi'(z)}{2},\]
and thus,
\[\varphi'(z)<2k\psi_2'(\varphi(z))\varphi'(z)\le \varphi'(z),\]
which is a contradiction.

\vspace{12pt}
\noindent
{\bf Case 2}. Suppose that $\varphi$ is decreasing. Define $w^*=-w$ and $\varphi^*(z)=\varphi(-z)$. Then, $w^*$ is concave and strictly increasing, $\varphi^*$ is increasing, and $u=\varphi^*\circ w^*$. Choose $z_0,z_1\in -I$ and $t\in [0,1]$, and define $z_t=(1-t)z_0+tz_1$. Choose any $\varepsilon>0$. Let $g_k^*=h_k^*\circ \varphi^*$. For $i\in \{0,1\}$, there exists $x^i\in \mathbb{R}^2_{++}$ such that
\[g_k^*(z_i)-\varepsilon<-x^i_1-kx^i_2,\ u(x^i)\ge \varphi^*(z_i).\]
Hence, $w^*(x^i)\ge z_i$, and thus,
\[w^*((1-t)x^0+tx^1)\ge (1-t)w^*(x^0)+tw^*(x^1)\ge z_t,\]
which implies that $u((1-t)x^0+tx^1)\ge \varphi^*(z_t)$. Therefore,
\begin{align*}
g_k^*(z_t)\ge&~-[(1-t)x^0_1+tx^1_1]-k[(1-t)x^0_2+tx^1_2]\\
=&~(1-t)(-x^0_1-kx^0_2)+t(-x^1_1-kx^1_2)\\
>&~(1-t)g_k^*(z_0)+tg_k^*(z_1)-\varepsilon.
\end{align*}
Since $\varepsilon>0$ is arbitrary,
\[g_k^*(z_t)\ge (1-t)g_k^*(z_0)+tg_k^*(z_1).\]
This implies that $g_k^*$ is concave. Letting $k\to \infty$, we have that $-\psi_1\circ \varphi^*$ is concave. Because $\psi_1$ is increasing and linear, we have that $\varphi^*$ is convex, and hence $\varphi$ is also convex.

Hence, $\varphi$ is locally Lipschitz. By Rademacher's theorem, there exists $z\in I$ such that $\varphi$ is differentiable at $z$. If $\varphi'(z)=0$, then $z$ is a minimum point of $\varphi$, which contradicts the monotonicity assumption of $\varphi$. Therefore, $\varphi'(z)<0$. Define $k=\frac{2}{\varphi(z)+2}$. Then, $\psi_1(\varphi(z))=k\psi_2(\varphi(z))$. By a simple calculation,
\begin{align*}
\left.\frac{d}{dz'}(\psi_1(\varphi(z'))-k\psi_2(\varphi(z')))\right|_{z'=z}=&~\frac{\varphi'(z)}{2}-k\psi_2'(\varphi(z))\varphi'(z)\\
=&~\varphi'(z)\left(\frac{1}{2}-\frac{\varphi(z)+1}{\varphi(z)+2}\right)>0.
\end{align*}
Hence, there exists a neighborhood $I'$ of $z$ such that if $z'\in I'$ and $z'>z$, then $g_k^*(z')=-k\psi_2(\varphi(z'))$, and if $z'\in I'$ and $z>z'$, then $g_k^*(z')=-\psi_1(\varphi(z'))$. Because $g_k^*$ is concave, we have that
\[-k(\psi_2\circ \varphi)'(z)=D_+g_k^*(z)\le D_-g_k^*(z)=-\frac{\varphi'(z)}{2},\]
and thus,
\[\varphi'(z)>2k\psi_2'(\varphi(z))\varphi'(z)\ge \varphi'(z),\]
which is a contradiction.

Hence, in each case, we lead a contradiction. This implies that such a convex function $w$ does not exist.

In this proof, the idea of using an artificial function $h_k$ is obtained by Fenchel \cite{F}. For more detailed arguments, see Kannai \cite{K} or Monteiro \cite{MO}.

\subsection{Katzner's Counterexample}
In this subsection, we present an example of $g$ in which (III) of Proposition \ref{QC2} holds but (II) is violated. This example is found by Katzner \cite{KA}.

Consider $U=\mathbb{R}^2_{++}$, and
\[g_1(x)=-3x_1^2x_2-x_2^3,\]
\[g_2(x)=-x_1^3-3x_1x_2^2.\]
A regular solution to (\ref{TDE}) is as follows.
\[u(x)=-(x_1^3x_2+x_1x_2^3).\]
It is easy to show that $u$ is strictly quasi-convex. By simple calculation,
\[\frac{\partial g_1}{\partial x_1}(x)=-6x_1x_2,\ \frac{\partial g_1}{\partial x_2}(x)=-3x_1^2-3x_2^2,\]
\[\frac{\partial g_2}{\partial x_1}(x)=-3x_1^2-3x_2^2,\ \frac{\partial g_2}{\partial x_2}(x)=-6x_1x_2.\]
Choose $v=(-1,1)$ and $x=(1,1)$. Then $g(x)=(-4,-4)$, and thus $g(x)\cdot v=0$. However,
\begin{align*}
\lim_{t\downarrow 0}\frac{1}{t}[v\cdot g(x+tv)]=&~\lim_{t\downarrow 0}\frac{1}{t}[v\cdot (g(x+tv)-g(x))]\\
=&~\sum_{i,j=1}^2\frac{\partial g_i}{\partial x_j}(x)v_iv_j\\
=&~0,
\end{align*}
and thus $g$ violates (II) of Proposition \ref{QC2}.

\section*{Acknowledgments}

This paper is supported by Chuo University Grant for Special Research. I am grateful to Toru Maruyama and Hisatoshi Tanaka for their helpful suggestions.

\end{document}